
\documentclass[12pt]{amsart}

\theoremstyle{plain}
\newtheorem{theorem}{Theorem}[section]

\newtheorem{conjecture}[theorem]{Conjecture}
\newtheorem{corollary}[theorem]{Corollary}

\newtheorem{lemma}[theorem]{Lemma}

\newtheorem{proposition}[theorem]{Proposition}

\newtheorem{problem}[theorem]{Problem}

\theoremstyle{definition}

\numberwithin{equation}{section}

\usepackage{fullpage}
\usepackage{amssymb}
\usepackage{tikz}

\def\ldiv{\backslash}

\def\Z{\mathbb Z}

\def\W{\mathcal{W}}

\def\mlt#1{\mathrm{Mlt}(#1)}
\def\inn#1{\mathrm{Inn}(#1)}

\def\aut#1{\mathrm{Aut}(#1)}
\def\tmlt#1{\mathrm{TMlt}(#1)}
\def\tinn#1{\mathrm{TInn}(#1)}

\def\abext#1#2#3{#1\,{:}_{#3}\,#2}
\def\id{\mathrm{id}}
\def\restr#1{{\restriction}_{#1}}

\begin{document}

\title{Abelian extensions and solvable loops}

\author{David Stanovsk\'y}

\author{Petr Vojt\v{e}chovsk\'y}

\address[Stanovsk\'y]{Department of Algebra, Faculty of Mathematics and Physics, Charles University, Sokolovsk\'a 83, 18675 Praha 8, Czech Republic}
\address[Vojt\v{e}chovsk\'y]{Department of Mathematics, University of Denver, 2280 S Vine St, Denver, Colorado 80208, U.S.A.}
\email[Stanovsk\'y]{stanovsk@karlin.mff.cuni.cz}
\email[Vojt\v{e}chovsk\'y]{petr@math.du.edu}

\begin{abstract}
Based on the recent development of commutator theory for loops, we provide both syntactic and semantic characterization of abelian normal subloops. We highlight the analogies between well known central extensions and central nilpotence on one hand, and abelian extensions and congruence solvability on the other hand. In particular, we show that a loop is congruence solvable (that is, an iterated abelian extension of commutative groups) if and only if it is not Boolean complete, reaffirming the connection between computational complexity and solvability. Finally, we briefly discuss relations between nilpotence and solvability for loops and the associated multiplication groups and inner mapping groups.
\end{abstract}

\keywords{Commutator theory for loops, abelian normal subloop, solvable loop, nilpotent loop, abelian extension, central extension.}

\subjclass[2010]{20N05, 08B10}

\thanks{Research partially supported by GA\v CR grant 13-01832S (David Stanovsk\'y) and by Simons Foundation Collaboration Grant 210176 (Petr Vojt\v{e}chovsk\'y).}

\maketitle

\section{Introduction}

Building on the pioneering work of Smith \cite{S} and others, in 1987 Freese and McKenzie developed commutator theory for congruence modular varieties \cite{FM}. Since the variety of loops is congruence modular, the Freese-McKenzie theory can be routinely transferred to loops. However, it is not routine to describe the commutator of two loop congruences (equivalently, two normal subloops) efficiently and with regard to the permutation groups typically associated with loops. This has been recently done in \cite{SV}, laying foundations for commutator theory in loops. The main result of \cite{SV} is restated here as Theorem \ref{Th:mainSV}.

Continuing the program of \cite{SV}, we take a closer look at central and abelian subloops, two notions derived from the Freese-McKenzie commutator theory. The main result of this paper, Theorem \ref{Th:main_abelian}, is a syntactic and semantic characterization of abelian subloops, analogous to the well-known characterization of central subloops, reviewed in Theorem \ref{Th:main_central}.

The semantic characterization of abelian normal subloops leads to the notion of abelian extensions, a generalization of central extensions. Unlike for groups, abelian extensions of loops are manageable objects because the cocycle condition that ensures that the construction yields a loop is simple and of combinatorial character.

Following universal algebra, nilpotent loops should be precisely the loops obtained by iterated central extensions. It turns out that this notion of nilpotence coincides with Bruck's central nilpotence (finite upper central series), as proved for instance in \cite{SV}.

Analogously, solvable loops should be precisely the loops obtained by iterated abelian extensions. It turns out that this notion of solvability is strictly stronger than Bruck's solvability (subnormal series with commutative groups as factors), as noted already in \cite{FM}. To avoid confusion, we call the former \emph{congruence solvability} and the latter \emph{classical solvability}. The two notions coincide in groups, where we will safely use the term \emph{solvability}.

There is an interesting connection between solvability and \emph{Boolean completeness}, the ability of an algebraic structure to express arbitrary Boolean functions via terms involving constants and variables. It is well known, cf. \cite{MaRh, Straubing}, that a group is Boolean complete if and only if it is not solvable. In \cite{LMT,MTLBD}, the authors set out to generalize this result to loops (and quasigroups). To that effect, they defined a certain extension of loops, called \emph{affine quasidirect product}, and let \emph{polyabelian} loops to be precisely the loops obtained by iterated affine quasidirect products of commutative groups. They proved that loops are Boolean complete if and only if they are not polyabelian. Consequently, they argued that polyabelianess should be the correct notion of solvability for loops, and they proved that it lies strictly between nilpotency and classical solvability.

As a byproduct of our investigation of the Freeze-McKenzie commutator theory in loops, we show that abelian extensions of loops are precisely the affine quasidirect products of \cite{LMT,MTLBD} and, consequently, that a loop is Boolean complete if and only if it is not congruence solvable. This generalizes the connection between solvability and Boolean completeness from groups, and we believe that it will lead to new applications of commutator theory in computational complexity.

The paper is organized as follows. Section \ref{Sec:comm} gives a brief introduction to commutator theory for loops. Abelian extensions are developed in Section \ref{Sec:extensions}. The main result, Theorem \ref{Th:main_abelian}, is proved in Section \ref{Sec:main}. In Section \ref{Sec:sn} we discuss dependencies between the two notions of solvability, central nilpotence, and supernilpotence, as applied to loops, their multiplication groups and their inner mapping groups.

\section{Commutator theory for loops}\label{Sec:comm}

In this section we present a condensed introduction to commutator theory for loops, compiled from \cite{SV}, where all proofs, examples and additional details  can be found. See \cite{B,P} for an introduction to loop theory, \cite{Berg} for universal algebra, and \cite{MS} for general commutator theory.

For any groupoid $(Q,\cdot)$ and $x\in Q$ let $L_x:Q\to Q$, $R_x:Q\to Q$ be the translations
\begin{displaymath}
    L_x(y) = xy,\quad R_x(y) = yx.
\end{displaymath}
A groupoid $(Q,\cdot)$ is a \emph{quasigroup} if the translations $L_x$, $R_x$ are bijections of $Q$ for every $x\in Q$. Every quasigroup can be equipped with the division operations
\begin{displaymath}
    x\ldiv y = L_x^{-1}(y),\quad y/x = R_x^{-1}(y).
\end{displaymath}
A quasigroup $Q$ is a \emph{loop} if it possesses a neutral element $1$, that is, an element satisfying $1\cdot x = x\cdot 1 = x$ for every $x\in Q$. From now on, $Q$ will always denote a loop.

The \emph{multiplication group} and the \emph{inner mapping group} of $Q$ are defined as
\begin{displaymath}
    \mlt{Q} = \langle L_x,R_x:x\in Q\rangle,\quad \inn{Q} = \{\varphi\in\mlt{Q}:\varphi(1)=1\},
\end{displaymath}
respectively. It proved useful (although we do not know if it was necessary) in \cite{SV} to work not just with the multiplication and inner mapping groups, but also with the larger \emph{total multiplication group} and \emph{total inner mapping group}, defined by
\begin{displaymath}
    \tmlt{Q} = \langle L_x,R_x,M_x:x\in Q\rangle,\quad \tinn{Q} = \{\varphi\in\tmlt{Q}:\varphi(1)=1\},
\end{displaymath}
respectively, where $M_x:Q\to Q$ is given by
\begin{displaymath}
    M_x(y) = y\ldiv x.
\end{displaymath}
Note that $M_x^{-1}(y) = x/y$. We refer to elements of $\inn{Q}$ (resp. $\tinn{Q}$) as \emph{inner mappings} (resp. \emph{tot-inner mappings}).

Consider the mappings
\begin{align*}
    T_x &= R_x^{-1}L_x,\\
    U_x &= R_x^{-1}M_x,\\
    L_{x,y} &= L_{L_x(y)}^{-1}L_xL_y = L_{xy}^{-1}L_xL_y,\\
    R_{x,y} &= R_{R_x(y)}^{-1}R_xR_y = R_{yx}^{-1}R_xR_y,\\
    M_{x,y} &= M_{M_x(y)}^{-1}M_xM_y = M_{y\ldiv x}^{-1}M_x M_y.
\end{align*}
It is well known that
\begin{displaymath}
    \inn{Q} = \langle T_x,L_{x,y},R_{x,y}:x,y\in Q\rangle,
\end{displaymath}
and one can prove in a similar way that
\begin{displaymath}
    \tinn{Q} = \langle T_x,U_x,L_{x,y},R_{x,y},M_{x,y}:x,y\in Q\rangle.
\end{displaymath}
This and other generating sets of $\tinn{Q}$ can be found in \cite{SV}.

A subloop $A$ of $Q$ is \emph{normal} in $Q$ if it is a kernel of a homomorphism. This happens if and only if $\varphi(A)=A$ for every $\varphi\in\inn{Q}$, or equivalently, if and only if $\varphi(A)=A$ for every $\varphi\in\tinn{Q}$.

A \emph{word} $W$ is a formal product of letters $L_{t(\bar x)}$, $R_{t(\bar x)}$ and their inverses, where $t(\bar x)=t(x_1,\dots,x_n)$ is a loop term. For instance, $W = L_xR_y^{-1}L_{x/y}$ is a word. Upon substituting elements $u_i$ of a particular loop $Q$ for the variables $x_i$ in a word $W$ and upon interpreting $L_{t(\bar u)}$, $R_{t(\bar u)}$ as translations of $Q$, we obtain $W_{\bar u}$, an element of $\mlt{Q}$. If $W_{\bar u}(1)=1$ for every loop $Q$ with neutral element $1$ and every assignment of elements $u_i\in Q$, we say that $W$ is an \emph{inner word}. For instance, $R_{xy}^{-1}L_xR_y$ is an inner word, while $R_xL_x$ is not. The concept of \emph{tot-inner word} is defined similarly, allowing $M_{t(\bar x)}$ as generating letters.

A detailed and well-motivated definition of the commutator of two loop congruences can be found in \cite{SV}. The following result from \cite{SV} describes the commutator in purely loop theoretical fashion.

\begin{theorem}\label{Th:mainSV}
Let $\mathcal W$ be a set of tot-inner words such that for every loop $Q$ we have $\tinn{Q} = \langle W_{\bar u}:W\in\mathcal W,\,u_i\in Q\rangle$.
Let $Q$ be a loop and $A$, $B$ two normal subloops of $Q$. The commutator $[A,B]_Q$ is the smallest normal subloop of $Q$ containing the set
$$\{W_{\bar u}(a)/W_{\bar v}(a):\ W\in\mathcal W,\,a\in A,\,u_i,v_i\in Q,\,u_i/v_i\in B\}.$$
\end{theorem}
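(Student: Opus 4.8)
The plan is to pass to congruences. Let $\alpha$, $\beta$ be the congruences of $Q$ whose blocks of $1$ are $A$ and $B$, so that $x\mathrel{\alpha}y$ iff $x/y\in A$ and $x\mathrel{\beta}y$ iff $x/y\in B$, and recall that by definition the subloop commutator $[A,B]_Q$ is the block of $1$ of the congruence commutator $[\alpha,\beta]$. Write $N$ for the smallest normal subloop of $Q$ containing the displayed set $S$, with associated congruence $\nu$. Since $[A,B]_Q$ is defined intrinsically, establishing $N=[A,B]_Q$ for an arbitrary admissible $\mathcal W$ proves at once that $N$ does not depend on the choice of $\mathcal W$. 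I would prove the two inclusions separately.

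For $N\subseteq[A,B]_Q$ I would invoke the standard description of the commutator in a congruence modular variety (\cite{FM,MS}): $[\alpha,\beta]$ is generated, as a congruence, by the pairs $(t(\bar b,\bar c),t(\bar b,\bar d))$, where $t$ is a loop term, $\bar a\mathrel{\alpha}\bar b$, $\bar c\mathrel{\beta}\bar d$, and the baseline identity $t(\bar a,\bar c)=t(\bar a,\bar d)$ holds. Given a generator $W_{\bar u}(a)/W_{\bar v}(a)$ of $S$, I would read the tot-inner word as the term $s(z,\bar x)=W_{\bar x}(z)$, with $z$ in the role of the $\alpha$-variable and $\bar x$ in the role of the $\beta$-variables. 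Choosing $\bar a=1$, $\bar b=a\in A$, and $\bar c=\bar u$, $\bar d=\bar v$ with $u_i/v_i\in B$, the baseline identity becomes $W_{\bar u}(1)=W_{\bar v}(1)$, which holds automatically because $W$ is tot-inner and fixes $1$. Hence $(W_{\bar u}(a),W_{\bar v}(a))$ is one of the generating pairs of $[\alpha,\beta]$, so $W_{\bar u}(a)/W_{\bar v}(a)\in[A,B]_Q$. As $[A,B]_Q$ is normal and contains every generator of $N$, this yields $N\subseteq[A,B]_Q$.

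The reverse inclusion $[A,B]_Q\subseteq N$ is the substantial direction, and amounts to showing that $\nu$ contains every generating pair of $[\alpha,\beta]$: for an arbitrary loop term $t$ and arbitrary $\bar a\mathrel{\alpha}\bar b$, $\bar c\mathrel{\beta}\bar d$ with $t(\bar a,\bar c)=t(\bar a,\bar d)$, one must check $t(\bar b,\bar c)/t(\bar b,\bar d)\in N$. The plan is a normalization bringing such a pair into the shape $W_{\bar u}(a)/W_{\bar v}(a)$ with $W\in\mathcal W$. First I would reduce the $\alpha$-block $\bar a\mathrel{\alpha}\bar b$ to a single element $a\in A$, changing one coordinate at a time and translating, using the normality of $A$. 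Next, freezing the $\beta$-parameters at $\bar u$ and then at $\bar v$ yields two unary polynomials that agree at the baseline $\alpha$-value; the baseline identity then lets me replace them, modulo $N$, by maps fixing $1$, that is, by tot-inner mappings, which the generating hypothesis on $\mathcal W$ expresses through the generators in $\mathcal W$. The $\beta$-move $\bar c\to\bar d$ then becomes the move $\bar u\to\bar v$ with $u_i/v_i\in B$. The need to track all three loop operations $\cdot$, $/$, $\ldiv$ through these manipulations is exactly why one works with $\tmlt{Q}$ and $\tinn{Q}$ rather than with $\mlt{Q}$ and $\inn{Q}$.

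I expect the crux to be this normalization: expressing an arbitrary loop term, evaluated along an $\alpha$-diagonal rendered degenerate by the baseline identity, as a product of generating tot-inner words acting on a single element of $A$, while keeping every intermediate division inside $N$. The difficulty is that loop multiplication is non-associative, so rewriting a polynomial as a prescribed product of translations produces correction terms each time two translations are interchanged or reassociated; verifying that these corrections are again of the displayed form $W_{\bar u}(a)/W_{\bar v}(a)$, and so absorbed by $N$, is the technical heart, and it is precisely the content supplied by \cite{SV}.
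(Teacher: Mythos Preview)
There is nothing to compare against: the present paper does not prove Theorem~\ref{Th:mainSV} at all. It is restated verbatim from \cite{SV} (see the sentence immediately preceding the theorem: ``The following result from \cite{SV} describes the commutator in purely loop theoretical fashion''), and no argument is given here. Your outline of the easy inclusion $N\subseteq[A,B]_Q$ via the term-condition description of the commutator is correct, and you rightly identify that the substantial direction---rewriting an arbitrary term-condition generator as a product of the displayed generators modulo $N$---is the technical core carried out in \cite{SV}; but since you ultimately defer that direction to \cite{SV} as well, your proposal and the paper are on equal footing: both cite \cite{SV} for the actual proof.
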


For instance, $\mathcal W = \{T_x,U_x,L_{x,y},R_{x,y},M_{x,y}\}$ is a suitable set of tot-inner words for Theorem \ref{Th:mainSV}.

\medskip

A normal subloop $A$ of a loop $Q$ is called
\begin{itemize}
    \item \emph{central in $Q$} if $[A,Q]_Q=1$,
    \item \emph{abelian in $Q$} if $[A,A]_Q=1$.
\end{itemize}
Obviously, every normal subloop that is central in $Q$ is also abelian in $Q$.

The \emph{center} of $Q$ is the normal subloop $Z(Q)$ of $Q$ consisting of all elements that commute and associate with all elements of $Q$. The universal-algebraic notion of center congruence specialized to loops corresponds to the subloop $Z(Q)$, and it is easy to see that a normal subloop is central in $Q$ if and only if it is a subloop of $Z(Q)$.

A normal subgroup of a group $G$ is abelian in $G$ if and only if it is a commutative group. Not so in loops. If a normal subloop of a loop $Q$ is abelian in $Q$, then it is a commutative group, but there are numerous examples of a loop $Q$ with a normal subloop that is a commutative group that is not abelian in $Q$.

A loop $Q$ is said to be \emph{classically solvable} if there is a series $1=A_0\le A_1\le\cdots \le A_n=Q$ of subloops of $Q$ such that, for every $i$, $A_i$ is normal in $A_{i+1}$ and the factor $A_{i+1}/A_i$ is a commutative group. A loop $Q$ is said to be \emph{congruence solvable} (resp. \emph{nilpotent}) if there is a series $1=A_0\le A_1\le\cdots \le A_n=Q$ of normal subloops of $Q$ such that every factor $A_{i+1}/A_i$ is abelian in $Q/A_i$ (resp. central in $Q/A_i$).

It follows from the remarks above that our definition of nilpotence is equivalent to the classical concept of central nilpotence in loop theory. A group is classically solvable if and only if it is congruence solvable, and we thus speak of \emph{solvable} groups without any danger of confusion. It was noticed already in \cite{FM} that for loops congruence solvability is strictly stronger than classical solvability.

\medskip

However one decides to define the commutator of two elements and the associator of three elements in loop theory, it should be a quantity that vanishes when the elements commute or associate, respectively. It turns out that certain commutators and associators are more suitable than others for calculations, depending on the context. One of the observations of \cite{SV} is that a useful class of commutators and associators is obtained if these are based on deviations of (tot-)inner mappings from the identity mapping. For the purposes of this paper, we therefore define the \emph{commutator} by
\begin{displaymath}
    [y,x] = T_y(x)/x = ((yx)/y)/x,
\end{displaymath}
and the \emph{associator} by
\begin{displaymath}
    [x,y,z] = R_{z,y}(x)/x = (((xy)z)/(yz))/x.
\end{displaymath}
(The transposition of $x$ and $y$ in $[y,x]$ is employed so that the commutator specializes to one of the two usual commutators of group theory, namely, $[y,x] = yxy^{-1}x^{-1}$.) We will use the following observation frequently in the proof of Theorem \ref{Th:main_abelian}.

\begin{lemma}\label{Lm:useful}
Let $A$ be a subloop of a loop $Q$, and suppose that $[a,b,x]=1$ for all $a$, $b\in A$, $x\in Q$. Then, for every $a\in A$ and $u$, $v\in Q$,
\begin{itemize}
	\item[(i)] if $u/v\in A$, then $a(u/v) = (au)/v$;
	\item[(ii)] if $u\ldiv v\in A$, then $(u\ldiv v)a = u\ldiv (va)$.
\end{itemize}
\end{lemma}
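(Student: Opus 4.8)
The plan is to reduce the associator hypothesis to a single multiplication identity and then read (i) and (ii) off it. Unwinding $[x,y,z]=R_{z,y}(x)/x$ with $R_{z,y}=R_{yz}^{-1}R_zR_y$, the assumption $[a,b,x]=1$ is equivalent to $(ab)x=a(bx)$ for all $a,b\in A$ and all $x\in Q$; I will call this identity $(\star)$. Phrased with translations, $(\star)$ says $L_{ab}=L_aL_b$ on all of $Q$ whenever $a,b\in A$, so in particular $A$ associates with itself and is a group, whose inverses I will use freely.

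For (i), suppose $u/v\in A$ and set $b=u/v\in A$, so that $bv=u$. Applying $(\star)$ with third entry $v$ gives $(ab)v=a(bv)=au$, and applying $R_v^{-1}$ to both sides yields $ab=(au)/v$. As $ab=a(u/v)$, this is exactly the claim $a(u/v)=(au)/v$.

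Part (ii) is the left--right mirror of (i). Writing $b=u\ldiv v\in A$, so that $ub=v$, the assertion $(u\ldiv v)a=u\ldiv(va)$ is, after applying $L_u$, equivalent to $u(ba)=(ub)a=va$, that is, to the dual identity $(\star')$: $x(ba)=(xb)a$ for all $a,b\in A$, $x\in Q$ (equivalently $R_{ba}=R_aR_b$, equivalently $L_{x,b}(a)=a$). Granting $(\star')$, part (ii) closes by the computation of (i) read on the opposite side: with $b=u\ldiv v$, use $(\star')$ with $x=u$ to obtain $u(ba)=(ub)a=va$, then apply $L_u^{-1}$.

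The main obstacle is thus the step from $(\star)$ to $(\star')$. The hypothesis is the vanishing of the associator with its two $A$-entries in the first two slots, whereas $(\star')$ is the vanishing with them in the last two slots; these are dual conditions, and $(\star')$ does not follow from $(\star)$ for an arbitrary subloop (e.g. if $A=\{1,a\}$ with $a^2=1$, then $(\star)$ is the left inverse property for $a$ and $(\star')$ is the right one, which are independent). I would therefore supply $(\star')$ from the fuller information available on $A$: in the intended application $A$ is abelian in $Q$, so Theorem \ref{Th:mainSV} makes every $\mathcal{W}$-deviation vanish on $A$, and the vanishing of the $L_{x,y}$-deviation is precisely $L_{x,b}(a)=a$, i.e. $(\star')$. (Equivalently, one records at the outset the symmetric form of the associator hypothesis, that both the $R$- and the $L$-associators vanish on $A$.) Once $(\star')$ is in place the two halves of the lemma are genuine mirror images, each proved by the same one-line manipulation.
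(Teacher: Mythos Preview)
Your proof of (i) is identical to the paper's: set $b=u/v\in A$, apply the hypothesis $[a,b,v]=1$ to get $(a(u/v))v=au$, and divide by $v$.

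For (ii) you have in fact caught a genuine sloppiness in the paper. The paper writes only ``Part (ii) is proved dually,'' but as you correctly observe, the dual argument requires the identity $(\star')$: $(xb)a=x(ba)$ for $a,b\in A$ and $x\in Q$, i.e.\ $[x,b,a]=1$, whereas the lemma's hypothesis $[a,b,x]=1$ gives only $(\star)$: $(ab)x=a(bx)$. Your counterexample with $A=\{1,a\}$, $a^2=1$ is exactly right: $(\star)$ becomes $L_a^2=\id$ and $(\star')$ becomes $R_a^2=\id$, and these are independent in a general loop. So part (ii) does not follow from the stated hypothesis alone.

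Your diagnosis of the fix is also correct: everywhere the paper invokes part (ii) of the lemma (in the proofs under condition (A3) and in the later discussion of tot-inner mappings), the full package $[a,b,x]=[a,x,b]=[x,a,b]=1$ is already in hand, so $(\star')$ is available and the one-line dual computation goes through. The cleanest repair is to strengthen the lemma's hypothesis to ``$[a,b,x]=1$ and $[x,a,b]=1$ for all $a,b\in A$, $x\in Q$'' (or simply to assume both associator vanishings symmetrically); then (i) and (ii) really are mirror images, and your proof is complete.
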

\begin{proof}
(i) Since $u/v\in A$, we have $[a,u/v,v]=1$, hence $(a(u/v))v = a((u/v)v) = au$, and thus $a(u/v) = (au)/v$. Part (ii) is proved dually.
\end{proof}

\section{Abelian and central extensions}\label{Sec:extensions}

Let $A=(A,+,-,0)$ be a commutative group and $F=(F,\cdot,/,\ldiv)$ a quasigroup. A triple $\Gamma=(\varphi,\psi,\theta)$ is called a \emph{cocycle} if $\varphi$, $\psi$ are mappings $F\times F\to\aut A$ and $\theta$ is a mapping $F\times F\to A$. The values of the mappings will be denoted shortly by $\varphi(x,y)=\varphi_{x,y}$, $\psi(x,y)=\psi_{x,y}$ and $\theta(x,y)=\theta_{x,y}$.

Given a cocycle $\Gamma = (\varphi,\psi,\theta)$, define a multiplication on the set $A\times F$ by
\begin{equation}\label{Eq:abext}
    (a,x)\cdot(b,y)=(\varphi_{x,y}(a)+\psi_{x,y}(b)+\theta_{x,y},\,xy).
\end{equation}
It is straightforward to check that this is a quasigroup operation, where (as we will need later)
\begin{align*}
    (a,x)\ldiv(b,y)&=(\psi_{x,x\ldiv y}^{-1}(b-\varphi_{x,x\ldiv y}(a)-\theta_{x,x\ldiv y}),\,x\ldiv y),\\
    (a,x)/    (b,y)&=(\varphi_{x/y,y}^{-1}(a-\psi_{x/y,y}(b)-\theta_{x/y,y}),\,x/y).
\end{align*}
The resulting quasigroup $(Q,\cdot,/,\ldiv)$ is called the \emph{abelian extension of $A$ by $F$ over a cocycle $\Gamma$}, and will be denoted by $Q=\abext{A}{F}{\Gamma}$. If $\Gamma$ is clear from the context or if it is not important, we speak of an \emph{abelian extension of $A$ by $F$}.

Let us briefly investigate the question ``When is $\abext{A}{F}{\Gamma}$ a loop?''

\begin{lemma}\label{Lm:abext_loop}
Let $A$ be an abelian group, $F$ a quasigroup, and $\Gamma=(\varphi,\psi,\theta)$ a cocycle. Then $Q=\abext{A}{F}{\Gamma}$ is a loop with neutral element $(a,x)$ if and only if $x=1$ is the neutral element of $F$ and
$$\varphi_{y,1}=\id = \psi_{1,y},\quad \varphi_{1,y}(a)+\theta_{1,y}=0=\psi_{y,1}(a)+\theta_{y,1}$$ hold for every $y\in F$.
\end{lemma}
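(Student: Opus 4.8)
The plan is to invoke the fact, just established before the statement, that $Q=\abext{A}{F}{\Gamma}$ is already a quasigroup; hence $Q$ is a loop exactly when it possesses a (necessarily unique) two-sided neutral element. I would therefore fix an arbitrary $(a,x)\in A\times F$ and unpack the two requirements
\begin{displaymath}
    (a,x)\cdot(b,y)=(b,y)\quad\text{and}\quad(b,y)\cdot(a,x)=(b,y)\qquad\text{for all }(b,y)\in A\times F
\end{displaymath}
by comparing the two coordinates of each side through the product formula \eqref{Eq:abext}. The whole argument is then a coordinatewise translation of ``$(a,x)$ is neutral'' into conditions on $\Gamma$, so there is no structural obstacle; the only point requiring a little care is isolating the automorphism conditions from the constant conditions, which I address below.

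First I would extract the second coordinates. Since $(a,x)\cdot(b,y)$ has second coordinate $xy$ and $(b,y)\cdot(a,x)$ has second coordinate $yx$, neutrality forces $xy=y$ and $yx=y$ for every $y\in F$. Thus $x$ is at once a left and a right neutral element of the quasigroup $F$, and the standard identity $e_L=e_Le_R=e_R$ shows these coincide, so $F$ is a loop with neutral element $x=1$. This settles the first assertion of the lemma and, crucially, allows me to substitute $x=1$ into the remaining first-coordinate equations.

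With $x=1$ in hand, left neutrality reads $\varphi_{1,y}(a)+\psi_{1,y}(b)+\theta_{1,y}=b$ for all $b\in A$ and $y\in F$. The key step — and the one genuinely non-mechanical move in the proof — is to use that this holds for \emph{every} $b$: putting $b=0$ and using $\psi_{1,y}(0)=0$ (an automorphism fixes $0$) isolates the constant as $\varphi_{1,y}(a)+\theta_{1,y}=0$, and substituting this back leaves $\psi_{1,y}(b)=b$ for all $b$, i.e. $\psi_{1,y}=\id$. A symmetric treatment of right neutrality, $\varphi_{y,1}(b)+\psi_{y,1}(a)+\theta_{y,1}=b$, yields $\psi_{y,1}(a)+\theta_{y,1}=0$ and $\varphi_{y,1}=\id$. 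These four conclusions are precisely the displayed cocycle conditions, completing the forward implication.

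For the converse I would simply run the computation backwards. Assuming $F$ is a loop with neutral element $1$ and that the four conditions hold, I substitute $x=1$ into \eqref{Eq:abext}: the identity $\psi_{1,y}=\id$ together with $\varphi_{1,y}(a)+\theta_{1,y}=0$ gives $(a,1)\cdot(b,y)=(b,y)$, while $\varphi_{y,1}=\id$ together with $\psi_{y,1}(a)+\theta_{y,1}=0$ gives $(b,y)\cdot(a,1)=(b,y)$. Hence $(a,1)$ is a two-sided neutral element and $Q$ is a loop, which is all that remains.
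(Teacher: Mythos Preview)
Your proposal is correct and follows essentially the same approach as the paper: both compare coordinates in the product formula \eqref{Eq:abext}, read off $x=1$ from the second coordinate, and then separate the automorphism identities from the constant identities by specializing the first-coordinate equation to $b=0$. The only cosmetic difference is order of presentation---the paper plugs in $(0,y)$ first and then general $(b,y)$, whereas you write the general equation and specialize---and your remark about $e_L=e_Le_R=e_R$ is superfluous since the same $x$ is already seen to be both a left and a right neutral element.
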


\begin{proof}
Suppose that $Q$ is a loop with neutral element $(a,x)$. Then
\begin{align*}
    (0,y)&=(a,x)(0,y) = (\varphi_{x,y}(a)+\psi_{x,y}(0)+\theta_{x,y},xy) = (\varphi_{x,y}(a)+\theta_{x,y},xy),\\
    (0,y)&=(0,y)(a,x) = (\varphi_{y,x}(0)+\psi_{y,x}(a)+\theta_{y,x},yx) = (\psi_{y,x}(a)+\theta_{y,x},yx)
\end{align*}
for every $y\in F$. We deduce $xy=y=yx$, so $x=1$ is the neutral element of $F$, and also $\varphi_{1,y}(a)+\theta_{1,y}=0=\psi_{y,1}(a)+\theta_{y,1}$ for every $y\in F$. Using these facts, we have
\begin{align*}
    (b,y) &=(a,1)(b,y) = (\varphi_{1,y}(a)+\psi_{1,y}(b)+\theta_{1,y},y) = (\psi_{1,y}(b),y),\\
    (b,y) &=(b,y)(a,1) = (\varphi_{y,1}(b)+\psi_{y,1}(a)+\theta_{y,1},y) = (\varphi_{y,1}(b),y)
\end{align*}
for every $(b,y)\in Q$, and thus $\varphi_{y,1}=\id = \psi_{1,y}$ for every $y\in F$.

Conversely, it is straightforward to check that the stated cocycle conditions force $(a,1)$ to be the neutral element of $F$.
\end{proof}

In particular, the extension $Q=\abext{A}{F}{\Gamma}$ is a loop with neutral element $(0,1)$ if and only if $\varphi_{y,1}=\id = \psi_{1,y}$ and $\theta_{1,y} = 0 = \theta_{y,1}$ for every $y\in F$. As the following result shows, up to isomorphism, we can always assume that the neutral element of $Q=\abext{A}{F}{\Gamma}$ is $(0,1)$.

\begin{lemma}
Let $A$ be an abelian group, $F$ a loop with neutral element $1$, and $\Gamma=(\varphi,\psi,\theta)$ a cocycle. Suppose that $Q = \abext{A}{F}{\Gamma}$ has neutral element $(a,1)$. Then $\bar{\Gamma}=(\varphi,\psi,\bar{\theta})$ with $\bar{\theta}_{x,y} = \theta_{x,y}+\varphi_{x,y}(a) + \psi_{x,y}(a)-a$ is a cocycle, and $\bar{Q} = \abext{A}{F}{\bar{\Gamma}}$ is a loop with neutral element $(0,1)$ that is isomorphic to $Q$.
\end{lemma}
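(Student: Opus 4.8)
The plan is to handle the three assertions in turn: the cocycle claim is immediate from the definitions, the neutral-element claim follows by substituting the constraints that $Q$ already satisfies into the formula for $\bar\theta$, and the isomorphism is obtained by translating the fiber coordinate. First, that $\bar\Gamma$ is a cocycle requires nothing beyond checking types: a cocycle is merely a triple of maps of the prescribed form, and since $\varphi,\psi$ are carried over unchanged and $\bar\theta_{x,y}=\theta_{x,y}+\varphi_{x,y}(a)+\psi_{x,y}(a)-a$ lies in $A$ for all $x,y$ (as $A$ is a group), the triple $\bar\Gamma=(\varphi,\psi,\bar\theta)$ qualifies.

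For the neutral-element claim, I would first record the constraints forced on $\Gamma$ by the hypothesis that $Q$ has neutral element $(a,1)$. By Lemma~\ref{Lm:abext_loop} these are exactly
$$\varphi_{y,1}=\id=\psi_{1,y},\qquad \varphi_{1,y}(a)+\theta_{1,y}=0=\psi_{y,1}(a)+\theta_{y,1}$$
for every $y\in F$. By the remark following Lemma~\ref{Lm:abext_loop}, the loop $\bar Q$ has neutral element $(0,1)$ precisely when $\varphi_{y,1}=\id=\psi_{1,y}$ and $\bar\theta_{1,y}=0=\bar\theta_{y,1}$ for every $y$. The first pair holds because $\varphi$ and $\psi$ are shared by $\Gamma$ and $\bar\Gamma$. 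For the second pair I would substitute into the definition of $\bar\theta$: using $\psi_{1,y}=\id$ gives $\bar\theta_{1,y}=\theta_{1,y}+\varphi_{1,y}(a)$, which vanishes by the left condition above, and symmetrically $\varphi_{y,1}=\id$ gives $\bar\theta_{y,1}=\theta_{y,1}+\psi_{y,1}(a)=0$.

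For the isomorphism, the shape of $\bar\theta$ suggests translating the fiber by $a$, so I would define $f\colon Q\to\bar Q$ by $f(b,y)=(b-a,\,y)$. This is plainly a bijection (inverse $(b,y)\mapsto(b+a,y)$) and sends the neutral element $(a,1)$ of $Q$ to $(0,1)$. It remains to verify multiplicativity. Expanding $f\big((b,y)\cdot(c,z)\big)$ via the defining multiplication \eqref{Eq:abext} and comparing with $f(b,y)\cdot f(c,z)$ computed in $\bar Q$, the additivity of the automorphisms $\varphi_{y,z}$ and $\psi_{y,z}$ lets me pull out $-\varphi_{y,z}(a)$ and $-\psi_{y,z}(a)$, and the defining formula for $\bar\theta_{y,z}$ is exactly what cancels these against the $+\varphi_{y,z}(a)+\psi_{y,z}(a)-a$ it contains, so both first coordinates reduce to $\varphi_{y,z}(b)+\psi_{y,z}(c)+\theta_{y,z}-a$. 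Since $f$ is a bijection preserving $\cdot$ and both structures are quasigroups, it automatically respects the induced divisions $/$ and $\ldiv$, so $f$ is an isomorphism.

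There is no genuine obstacle here; the entire content sits in the additive bookkeeping of the last paragraph, and the formula for $\bar\theta$ has evidently been reverse-engineered so that the translation $f$ becomes a homomorphism. The only point demanding a moment's care is confirming that the cocycle conditions extracted from Lemma~\ref{Lm:abext_loop} really force $\bar\theta_{1,y}$ and $\bar\theta_{y,1}$ to vanish, which hinges on the cancellation of $\varphi_{1,y}(a)$ against $\theta_{1,y}$ and of $\psi_{y,1}(a)$ against $\theta_{y,1}$.
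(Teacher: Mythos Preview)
Your proposal is correct and takes essentially the same approach as the paper: define $f(b,y)=(b-a,y)$ and verify by direct computation that it is a multiplicative bijection. The only difference is that you spend a paragraph verifying via Lemma~\ref{Lm:abext_loop} that $(0,1)$ is the neutral element of $\bar Q$, whereas the paper skips this and simply observes that the isomorphism $f$ carries the neutral element $(a,1)$ of $Q$ to $(0,1)$, which is therefore automatically the neutral element of $\bar Q$; your direct check is correct but unnecessary once the isomorphism is in hand.
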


\begin{proof}
Note that $\bar{\Gamma}$ is a cocycle.
The bijection $f:Q\to \bar{Q}$ defined by $f(b,y) = (b-a,y)$ is an isomorphism:
\begin{align*}
f(b,y)f(c,z)&=(\varphi_{y,z}(b-a)+\psi_{y,z}(c-a)+\bar\theta_{y,z},yz)\\
&=(\varphi_{y,z}(b)-\varphi_{y,z}(a)+\psi_{y,z}(c)-\psi_{y,z}(a)+\theta_{y,z}+\varphi_{y,z}(a) + \psi_{y,z}(a)-a,yz)\\
&=(\varphi_{y,z}(b)+\psi_{y,z}(c)+\theta_{y,z}-a,yz)=f((b,y)(c,z)).
\end{align*}
This isomorphism sends the neutral element $(a,1)$ of $Q$ onto $(0,1)$, which is in turn the neutral element of $\bar Q$.
\end{proof}

We are therefore led to the following definition. Let $A=(A,+,-,0)$ be a commutative group and $F=(F,\cdot,/,\ldiv,1)$ a loop. A triple $\Gamma=(\varphi,\psi,\theta)$ is called a \emph{loop cocycle} if $\varphi$, $\psi:F\times F\to\aut{A}$ and $\theta:F\times F\to A$ satisfy
\begin{displaymath}
    \varphi_{y,1}=\id = \psi_{1,y},\quad \theta_{1,y} = 0 = \theta_{y,1}
\end{displaymath}
for every $y\in F$.

For a loop cocycle $\Gamma=(\varphi,\psi,\theta)$ define multiplication on $A\times F$ by \eqref{Eq:abext}. The resulting algebra $\abext{A}{F}{\Gamma}$ is a loop with neutral element $(1,0)$, the \emph{abelian extension of $A$ by $F$ over a loop cocycle $\Gamma$}. This construction was originally presented in \cite{MTLBD} under the name ``affine quasidirect product.''

\begin{figure}
\begin{tikzpicture}[scale=0.07]
\draw (0,0)--(90,0);
\draw (0,0)--(0,-90);
\draw (0,-40)--(90,-40);
\draw (0,-80)--(90,-80);
\draw (40,0)--(40,-90);
\draw (80,0)--(80,-90);
\node at (20,5) {$(A,1)$};
\node at (60,5) {$(A,y)$};
\node at (-5,-20) [left] {$(A,1)$};
\node at (-5,-60) [left] {$(A,x)$};
\draw[<->] (45,-10) to node [right] {$\varphi_{1,y}$} (45,-35);
\draw[<->] (45,-50) to node [right] {$\varphi_{x,y}$} (45,-75);
\draw[<->] (10,-45) to node [below] {$\psi_{x,1}$} (35,-45);
\draw[<->] (50,-45) to node [below] {$\psi_{x,y}$} (75,-45);
\node at (70,-70) {$+\theta_{x,y}$};
\node at (85,-20) {$\cdots$};
\node at (85,-60) {$\cdots$};
\node at (20,-85) {$\vdots$};
\node at (60,-85) {$\vdots$};
\node at (85,-85) {$\ddots$};
\end{tikzpicture}
\caption{An abelian extension visualized.}
\label{Fig:abext}
\end{figure}
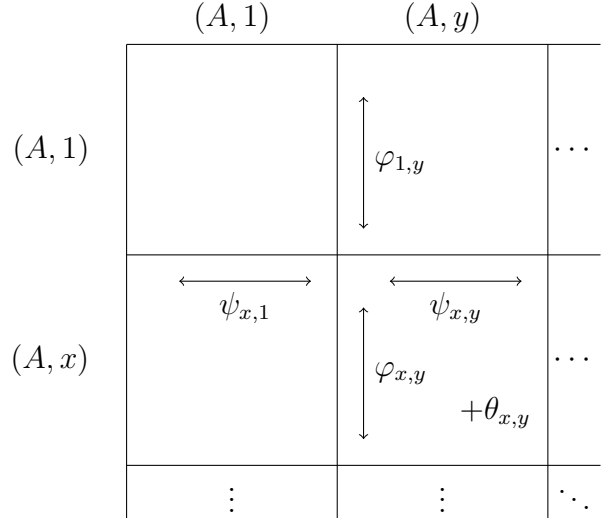

In Figure \ref{Fig:abext} we have visualized an abelian extension of $A$ by $F$, organizing the multiplication table of $Q=\abext{A}{F}{(\varphi,\psi,\theta)}$ according to the right cosets of $A$ in $Q$. The combinatorial nature of the construction is apparent from Figure \ref{Fig:abext}, given the fact that there are no conditions relating the constituents $\varphi$, $\psi$, $\theta$ of the cocycle.

\medskip

An abelian extension $\abext{A}{F}{(\varphi,\psi,\theta)}$ is called \emph{central} if $\varphi_{x,y}=\psi_{x,y}=\mathrm{id}$ for every $x$, $y\in F$, to be denoted shortly as $\varphi=\psi=1$. This is the classical concept of central extensions in loop theory that appears already in Bruck's fundamental work \cite{B}. Moreover, it is a special case of the more general central extensions in commutator theory for congruence modular varieties, see \cite[Chapter 7]{FM}.

\medskip

We conclude this section with several remarks on the structure of the multiplication group of abelian and central extensions of loops.

\begin{lemma}\label{Lm:form}
Let $Q=\abext{A}{F}{\Gamma}$ be an abelian extension of $A$ by $F$ over a loop cocycle $\Gamma$. Then every element $\gamma\in\mlt{Q}$ has the form
\begin{displaymath}
    \gamma(a,x) = (c_x+\gamma_x(a),C(x)),
\end{displaymath}
where $c_x\in A$, $\gamma_x\in\aut{A}$ and $C\in\mlt{F}$. Moreover,
\begin{itemize}
	\item[(i)] $\gamma\in\inn{Q}$ if and only if $c_1=0$ and $C\in\inn{F}$;
	\item[(ii)] if $Q=\abext{A}{F}{\Gamma}$ is a central extension, then $\gamma_x=\id$ for every $x\in F$.
\end{itemize}
\end{lemma}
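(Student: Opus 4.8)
The plan is to verify the claimed normal form on the generators $L_{(a,x)}$ and $R_{(a,x)}$ of $\mlt Q$, and then to check that the collection of all maps of this form is a subgroup of $\Sym(A\times F)$, so that it must contain the whole multiplication group.

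First I would compute the two families of translations directly from \eqref{Eq:abext}. For the left translation,
$$L_{(a,x)}(b,y)=(a,x)(b,y)=(\psi_{x,y}(b)+\varphi_{x,y}(a)+\theta_{x,y},\,xy),$$
which is of the required form with $c_y=\varphi_{x,y}(a)+\theta_{x,y}\in A$, $\gamma_y=\psi_{x,y}\in\aut A$ and $C=L_x\in\mlt F$; symmetrically, $R_{(a,x)}$ has $c_y=\psi_{y,x}(a)+\theta_{y,x}$, $\gamma_y=\varphi_{y,x}$ and $C=R_x$. Thus every generator of $\mlt Q$ has the claimed shape.

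Next I would let $\mathcal G$ denote the set of all self-maps of $A\times F$ of the form $(a,x)\mapsto(c_x+\gamma_x(a),C(x))$ with $c_x\in A$, $\gamma_x\in\aut A$ and $C\in\mlt F$, and show that $\mathcal G$ is a subgroup of $\Sym(A\times F)$. The identity lies in $\mathcal G$ (take $c_x=0$, $\gamma_x=\id$, $C=\id$). For closure under composition, writing $\delta(a,x)=(d_x+\delta_x(a),D(x))$ one computes
$$(\gamma\delta)(a,x)=\bigl(c_{D(x)}+\gamma_{D(x)}(d_x)+(\gamma_{D(x)}\circ\delta_x)(a),\,(CD)(x)\bigr),$$
which is again in $\mathcal G$; here it is essential that $\gamma_x,\delta_x\in\aut A$, so that $\gamma_{D(x)}\circ\delta_x\in\aut A$, and that $C,D\in\mlt F$, so that $CD\in\mlt F$. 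For closure under inverses one solves $\gamma\circ\gamma^{-1}=\id$ coordinatewise, obtaining $\gamma^{-1}(a,x)=(-\gamma_{C^{-1}(x)}^{-1}(c_{C^{-1}(x)})+\gamma_{C^{-1}(x)}^{-1}(a),C^{-1}(x))$, again of the right form since $C^{-1}\in\mlt F$ and $\gamma_{C^{-1}(x)}^{-1}\in\aut A$. As $\mathcal G$ is a subgroup containing all the $L_{(a,x)}$ and $R_{(a,x)}$, it contains $\mlt Q=\langle L_{(a,x)},R_{(a,x)}\rangle$, which proves the main assertion.

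The two refinements are then immediate. For (i), recall that the neutral element of $Q$ is $(0,1)$; evaluating $\gamma(0,1)=(c_1+\gamma_1(0),C(1))=(c_1,C(1))$ and using $\gamma_1(0)=0$, we see that $\gamma$ fixes $(0,1)$ precisely when $c_1=0$ and $C(1)=1$, and since $C\in\mlt F$ the condition $C(1)=1$ is exactly $C\in\inn F$. For (ii), in a central extension both generator families have automorphism part $\id$ (since $\varphi=\psi=1$); the subset of $\mathcal G$ consisting of maps with $\gamma_x=\id$ for all $x$ is visibly closed under the composition and inverse formulas above, hence is a subgroup of $\mathcal G$ containing all generators, so it contains $\mlt Q$, forcing $\gamma_x=\id$ for every $\gamma\in\mlt Q$ and every $x$. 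The only delicate point in the whole argument is the index bookkeeping in the composition formula: because $\gamma$ is applied to the output of $\delta$, the fiber subscript shifts from $x$ to $D(x)$, and one must track this carefully in both the composition and the inverse; beyond this, no genuine obstacle arises.
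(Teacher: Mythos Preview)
Your proof is correct and follows essentially the same route as the paper: verify the form on the generating translations $L_{(a,x)}$, $R_{(a,x)}$, then check that the set of maps of this shape is closed under composition and inversion, and read off (i) and (ii) from the explicit formulas. The only cosmetic difference is that you package the closure argument by naming the set $\mathcal G$ and calling it a subgroup, whereas the paper simply writes down the composite and inverse and observes they have the right form; the content is identical.
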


\begin{proof}
Let $\gamma = L_{(b,y)}\in\mlt{Q}$. Then $\gamma(a,x) = (b,y)(a,x) = (\varphi_{y,x}(b)+\psi_{y,x}(a)+\theta_{y,x},yx)$, so $\gamma(a,x) = (c_x+\gamma_x(a),C(x))$ with $c_x=\varphi_{y,x}(b)+\theta_{y,x}\in A$, $\gamma_x=\psi_{y,x}\in\aut{A}$ and $C=L_y\in\mlt{F}$. Hence $L_{(b,y)}$ has the desired form. Similarly for $R_{(b,y)}$. If $\gamma$, $\delta\in\mlt{Q}$ are of the form $\gamma(a,x) = (c_x+\gamma_x(a),C(x))$, $\delta(a,x) = (d_x+\delta_x(a),D(x))$ with appropriate components, then
\begin{align*}
    \gamma^{-1}(a,x) &= (-\gamma^{-1}_{C^{-1}(x)}(c_{C^{-1}(x)}) + \gamma^{-1}_{C^{-1}(x)}(a),C^{-1}(x)),\\
    \gamma\delta(a,x) &= ((c_{D(x)}+\gamma_{D(x)}(d_x))+\gamma_{D(x)}\delta_x(a),CD(x))
\end{align*}
are of the desired form.

(i) Note that $\gamma\in\inn{Q}$ if and only if $(0,1) = \gamma(0,1) = (c_1+\gamma_1(0),C(1)) = (c_1,C(1))$. Equivalently, $c_1=0$ and $C\in\inn{F}$.

(ii) Using $\varphi=\psi=1$, we obtain $\gamma_x=\id$ for left and right translations, and this property propagates to inverses and compositions.
\end{proof}

\begin{proposition}\label{Pr:central_mlt}
Let $Q=\abext{A}{F}{\Gamma}$ be a central extension of $A$ by $F$ over a loop cocycle $\Gamma$.
\begin{itemize}
	\item[(i)] If $\mlt{F}$ is solvable of class $k$, then $\mlt{Q}$ is solvable of class at most $k+1$.
	\item[(ii)] If $\inn{F}$ is solvable of class $\ell$, then $\inn{Q}$ is solvable of class at most $\ell+1$.
\end{itemize}
\end{proposition}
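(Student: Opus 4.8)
The plan is to exploit the structural description of $\mlt{Q}$ furnished by Lemma \ref{Lm:form}. Since $Q$ is a \emph{central} extension, part (ii) of that lemma shows that every $\gamma\in\mlt{Q}$ acts by $\gamma(a,x)=(c_x+a,C(x))$ with $c_x\in A$ and $C\in\mlt{F}$, the automorphism component $\gamma_x$ being trivial. I would first define the projection $\pi\colon\mlt{Q}\to\mlt{F}$ by $\pi(\gamma)=C$. Specializing the composition formula recorded in the proof of Lemma \ref{Lm:form} to $\gamma_x=\delta_x=\id$ gives $\gamma\delta(a,x)=(c_{D(x)}+d_x+a,CD(x))$, whence $\pi(\gamma\delta)=CD$ and $\pi$ is a group homomorphism. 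It is onto, since $L_{(b,y)}\mapsto L_y$ and $R_{(b,y)}\mapsto R_y$, and these generate $\mlt{F}$.

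Next I would analyze the kernel $N=\ker\pi$, consisting of those $\gamma$ with $C=\id$, that is, $\gamma(a,x)=(c_x+a,x)$. For two such maps the composition formula yields $(c_x+d_x+a,x)$, and because $A$ is commutative this equals $(d_x+c_x+a,x)$; hence $N$ is abelian. We thus obtain a short exact sequence $1\to N\to\mlt{Q}\xrightarrow{\pi}\mlt{F}\to 1$ with abelian kernel. If $\mlt{F}$ is solvable of class $k$, then $\mlt{Q}/N\cong\mlt{F}$ forces $(\mlt{Q})^{(k)}\subseteq N$, and since $N$ is abelian, $(\mlt{Q})^{(k+1)}=[(\mlt{Q})^{(k)},(\mlt{Q})^{(k)}]=1$. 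This proves (i).

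For (ii) I would restrict $\pi$ to $\inn{Q}$. By Lemma \ref{Lm:form}(i) the image already lies in $\inn{F}$, and it is in fact all of $\inn{F}$, because the standard generators of $\inn{Q}$ project onto generators of $\inn{F}$: from $T_{(b,y)}=R_{(b,y)}^{-1}L_{(b,y)}$ we get $T_{(b,y)}\mapsto T_y$, and likewise $L_{(b,y),(c,z)}\mapsto L_{y,z}$ and $R_{(b,y),(c,z)}\mapsto R_{y,z}$. The kernel of the restriction is $N\cap\inn{Q}$, a subgroup of the abelian group $N$ and hence abelian. Running the same derived-series argument with $\ell$ in place of $k$ gives that $\inn{Q}$ is solvable of class at most $\ell+1$.

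I do not expect a genuine obstacle here: once Lemma \ref{Lm:form} is in hand the result is essentially formal. The only points requiring care are the verification that $\pi$ is a homomorphism, which reduces to the composition formula already available, and the surjectivity onto $\inn{F}$ in part (ii), where one must confirm that the generating inner mappings of $Q$ project \emph{onto} the corresponding generators of $\inn{F}$ rather than merely into $\inn{F}$. After that, the solvability bound is an immediate consequence of having an abelian kernel over a solvable quotient.
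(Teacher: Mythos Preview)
Your argument is correct and is essentially the paper's proof, just repackaged: the paper computes $[\gamma,\delta](a,x)=(a+e_x,[C,D](x))$ directly and inducts on the derived series, whereas you phrase the same observation as a surjective homomorphism $\pi\colon\mlt{Q}\to\mlt{F}$ with abelian kernel. One minor remark: surjectivity of $\pi|_{\inn{Q}}$ onto $\inn{F}$ is not actually needed for the solvability bound, since any subgroup of a solvable group of class $\ell$ is itself solvable of class at most $\ell$; landing \emph{inside} $\inn{F}$ (which Lemma~\ref{Lm:form}(i) gives for free) already suffices.
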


\begin{proof}
By Lemma \ref{Lm:form}, any elements $\gamma$, $\delta\in\mlt{Q}$ are of the form $\gamma(a,x) = (a+c_x,C(x))$, $\delta(a,x) = (a+d_x,D(x))$, where $c_x$, $d_x\in A$ and $C$, $D\in\mlt{F}$. A quick calculation then shows that $$[\gamma,\delta](a,x) = \gamma\delta\gamma^{-1}\delta^{-1}(a,x)=(a+e_x,[C,D](x))$$
for some $e_x\in A$ (in fact, $e_x = -d_{D^{-1}(x)} - c_{C^{-1}D^{-1}(x)} + d_{C^{-1}D^{-1}(x)} + c_{DC^{-1}D^{-1}(x)}$). Since $\mlt{F}$ is solvable of class $k$, it follows by induction that all elements of $(\mlt{Q})^{(k)}$ are of the form $\gamma(a,x) = (a+c_x,x)$. Such mappings commute with one another, hence $\mlt{Q}$ is solvable of class at most $k+1$. Similarly for $\inn{Q}$.
\end{proof}

General abelian extensions are not as well behaved as central extensions with respect to solvability of $\mlt{Q}$ and $\inn{Q}$. The general expression for inner mappings, involving the automorphisms $\gamma_x$, indicates that $\inn Q$ may not be solvable even if $\inn F$ is, because the automorphism group of an abelian group is not necessarily solvable. The smallest such example is $A=\Z_2^3$ (with $\aut{A}$ isomorphic to the simple group $\mathrm{GL(3,2)}$) and $F=\Z_2$ (with $\inn{F}$ trivial). Indeed, computer calculations in the \texttt{LOOPS} package \cite{NaVo} for \texttt{GAP} \cite{GAP} show that there are many abelian extensions of $\Z_2^3$ by $\Z_2$ with non-solvable inner mapping groups. Nevertheless, the following problem is open:

\begin{problem}\label{Prob:inn_mlt_abelian}
Let $Q=\abext AF\Gamma$ be an abelian extension of $A$ by $F$. If both $\inn Q$ and $\mlt F$ are solvable, is $\mlt Q$ solvable?
\end{problem}

\section{Characterizing abelianess and centrality}\label{Sec:main}

We are now ready to state the main result, Theorem \ref{Th:main_abelian}, a characterization of abelian subloops.

\begin{theorem}\label{Th:main_abelian}
The following conditions are equivalent for a normal subloop $A$ of a loop $Q$:
\begin{enumerate}
\item[(A1)] $A$ is abelian in $Q$, that is, $[A,A]_Q=1$.
\item[(A2)] $W_{\bar u}W_{\bar{v}}^{-1}\restr{A}=\id$ for every inner word $W$ and every $u_i$, $v_i\in Q$ such that $u_i/v_i\in A$.
\item[(A3)] $\varphi\restr{A}\in\aut{A}$ for every $\varphi\in\inn{Q}$, and
$$[a,b]=[a,b,x]=[a,x,b]=[x,a,b]=1,\quad [a,x,u]=[a,x,v]$$ for every $a,b\in A$ and $x$, $u$, $v\in Q$ with $u/v\in A$.
\item[(A4)] $Q$ is an abelian extension of $A$ by $Q/A$.
\end{enumerate}
\end{theorem}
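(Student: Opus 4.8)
The plan is to run the cycle of implications (A1) $\Rightarrow$ (A2) $\Rightarrow$ (A3) $\Rightarrow$ (A4) $\Rightarrow$ (A1). Arranging them in this order is deliberate: the tot-inner machinery of Theorem \ref{Th:mainSV} is only touched at the two endpoints labelled (A1), so I never have to reduce the $M$-letters of a tot-inner word to the $L,R$-letters of an inner word. For (A1) $\Rightarrow$ (A2) I would apply Theorem \ref{Th:mainSV} with a set $\mathcal W$ of tot-inner words generating $\tinn Q$; then $[A,A]_Q=1$ forces $W_{\bar u}(a)=W_{\bar v}(a)$ for all $W\in\mathcal W$, $a\in A$ and $u_i/v_i\in A$. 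To pass from these generators to an arbitrary inner word I would check that the class of words $W$ satisfying $W_{\bar u}\restr A=W_{\bar v}\restr A$ (whenever $u_i/v_i\in A$) is closed under composition and inversion: normality gives $W^{(2)}_{\bar u}(a)\in A$, so $W^{(1)}_{\bar u}(W^{(2)}_{\bar u}(a))=W^{(1)}_{\bar v}(W^{(2)}_{\bar v}(a))$, and inversion is automatic since each $W_{\bar u}$ permutes $A$. Every inner word is a tot-inner word, so (A2) follows.

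For (A2) $\Rightarrow$ (A3) each commutator/associator identity is a specialization of (A2) in which the argument forced to lie in $A$ is the one being moved. Thus $T_bT_1^{-1}\restr A=\id$ yields $[a,b]=1$; $R_{x,b}R_{x,1}^{-1}\restr A=\id$ yields $[a,b,x]=1$; $R_{b,x}R_{1,x}^{-1}\restr A=\id$ yields $[a,x,b]=1$; and $R_{u,x}R_{v,x}^{-1}\restr A=\id$ yields $[a,x,u]=[a,x,v]$. The delicate associator with a general first entry is obtained not from its $R$-form but from the $L$-form: $L_{x,a}L_{x,1}^{-1}\restr A=\id$ gives $L_{x,a}(b)=b$, which unwinds through $L_{x,a}(b)=(xa)\ldiv(x(ab))$ to $(xa)b=x(ab)$, i.e. $[x,a,b]=1$. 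These identities already show $A$ is a commutative group, and they deliver the automorphism property: writing $xa=T_x(a)x$ and $xb=T_x(b)x$ and applying $[x,a,b]=1$, then $[a,x,b]=1$, then $[a,b,x]=1$ gives $x(ab)=(T_x(a)T_x(b))x$, so $T_x(ab)=T_x(a)T_x(b)$; the analogous computations for $L_{x,y}$ and $R_{x,y}$, together with closure under composition and inverse, show every $\varphi\in\inn Q$ restricts to an automorphism of $A$.

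The heart of the argument is (A3) $\Rightarrow$ (A4), and it is where I expect the real work. I would fix a section $s\colon F\to Q$ with $F=Q/A$ and $s(1)=1$, so every element of $Q$ is uniquely $a\,s(x)$, and set $\theta_{x,y}=(s(x)s(y))/s(xy)$, $\varphi_{x,y}=(R_{s(x)s(y)}^{-1}R_{s(y)}R_{s(x)})\restr A$ and $\psi_{x,y}=(R_{s(x)s(y)}^{-1}L_{s(x)}R_{s(y)})\restr A$. Both bracketed maps fix $1$, hence are inner, hence restrict to automorphisms of $A$ by (A3); the loop-cocycle conditions $\varphi_{y,1}=\id=\psi_{1,y}$ and $\theta_{1,y}=0=\theta_{y,1}$ are immediate. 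Then $g(a,x)=a\,s(x)$ is an isomorphism $\abext A F \Gamma\to Q$ once I verify the product identity $(a\,s(x))(b\,s(y))=(\varphi_{x,y}(a)+\psi_{x,y}(b)+\theta_{x,y})\,s(xy)$. Here is the crux: because $A$ is abelian but \emph{not} central, the associator $[a,s(x),s(y)]$ does not vanish. I would use condition (iv) to replace $[a,s(x),b\,s(y)]$ by $[a,s(x),s(y)]$ (valid since $(b\,s(y))/s(y)=b\in A$); in the additive notation this associator equals $\varphi_{x,y}(a)-a$, which rewrites the product as $\varphi_{x,y}(a)\cdot\big(s(x)(b\,s(y))\big)$. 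Since $s(x)(b\,s(y))=(\psi_{x,y}(b)+\theta_{x,y})\,s(xy)$, two applications of $[A,A,Q]=1$ (Lemma \ref{Lm:useful}) collapse the $A$-parts onto $s(xy)$ and give the claimed formula. The main obstacle is precisely this computation: condition (iv), the coset-invariance of $[a,x,-]$, is exactly what absorbs the non-central defect and forces the affine form to emerge.

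Finally, for (A4) $\Rightarrow$ (A1) I would observe that Lemma \ref{Lm:form} extends verbatim to $\tmlt Q$ and $\tinn Q$: the $M$-translations again have the form $\gamma(a,x)=(c_x+\gamma_x(a),C(x))$ with $\gamma_x\in\aut A$ and $C\in\tmlt F$, and $\gamma_x$, $C$ depend only on the $F$-coordinates. Whenever $u_i/v_i\in A$ the $F$-coordinates of $u_i$ and $v_i$ agree, so any tot-inner word satisfies $W_{\bar u}(a,1)=(\gamma_1(a),1)=W_{\bar v}(a,1)$ on $A=A\times\{1\}$ (using $c_1=0$ for inner maps). Hence every generator of $[A,A]_Q$ produced by Theorem \ref{Th:mainSV} is trivial, so $[A,A]_Q=1$, closing the cycle.
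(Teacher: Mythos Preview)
Your cycle (A1) $\Rightarrow$ (A2) $\Rightarrow$ (A3) $\Rightarrow$ (A4) $\Rightarrow$ (A1) is the paper's own route, and the steps (A1) $\Rightarrow$ (A2), (A3) $\Rightarrow$ (A4) and (A4) $\Rightarrow$ (A1) are correct (your (A4) $\Rightarrow$ (A1) via the extended Lemma~\ref{Lm:form} is in fact a cleaner packaging of the paper's explicit generator-by-generator computation).

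There is, however, a genuine gap in your (A2) $\Rightarrow$ (A3). You assert that ``these identities \dots\ deliver the automorphism property'' and that ``the analogous computations for $L_{x,y}$ and $R_{x,y}$'' go through using only the commutator/associator identities you have just extracted. Your $T_x$ computation is fine, but the analogous computations for $L_{x,y}$ and $R_{x,y}$ do \emph{not} go through from those identities alone: the paper's own optimality discussion exhibits a loop $Q=G[\oplus]$ with $G=\Z_2^2$ and $A=G\times 0$ in which all of $[a,b]=[a,b,x]=[a,x,b]=[x,a,b]=1$ and $[a,x,u]=[a,x,v]$ hold, yet some $\varphi\in\inn Q$ fails to restrict to an automorphism of $A$. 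Concretely, to show $R_{x,y}\restr A$ is multiplicative one needs $R_{x,y}(a)=R_{x,by}(a)$ (varying the \emph{second} subscript by an element of $A$), and your list only contains $[a,x,u]=[a,x,v]$, i.e.\ $R_{u,x}(a)=R_{v,x}(a)$ (varying the \emph{first} subscript). The fix is simply not to drop (A2) at that point: since $(by)/y\in A$, (A2) with $W=R_{x,y}$ gives $R_{x,by}(a)=R_{x,y}(a)$ directly, and similarly $L_{x,ya}(b)=L_{x,y}(b)$; with these in hand the multiplicativity of $R_{x,y}\restr A$ and $L_{x,y}\restr A$ follows exactly as the paper shows. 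Once you patch this, your argument coincides with the paper's.
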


\begin{proof}
(A1) $\Rightarrow$ (A2): Let $u_i$, $v_i\in Q$ be such that $u_i/v_i\in A$. By Theorem \ref{Th:mainSV}, condition (A1) implies that $W_{\bar u}(a)=W_{\bar v}(a)$ for every $a\in A$ and every tot-inner word $W$. Hence $W_{\bar u}W_{\bar v}^{-1}\restr{A} = \id$ for every tot-inner word $W$. Condition (A2) follows as a special case.

(A2) $\Rightarrow$ (A3): Let $a$, $b\in A$, $x$, $u$, $v\in Q$ be such that $u/v\in A$. Upon using suitable inner words $W$ in (A2) we obtain
\begin{align*}
    [a,b]&=T_a(b)/b=T_1(b)/b=[1,b]=1,\\
    [a,b,x] &= R_{x,b}(a)/a = R_{x,1}(a)/a = [a,1,x] = 1,\\
    [a,x,b] &= R_{b,x}(a)/a = R_{1,x}(a)/a = [a,x,1] = 1,\\
    [a,x,u] &= R_{u,x}(a) = R_{v,x}(a) = [a,x,v].
\end{align*}
Moreover, $(xa)\ldiv(x\cdot ab) = L_{x,a}(b) = L_{x,1}(b) = b$, and multiplying both sides by $xa$, we obtain $[x,a,b]=1$. Since $A$ is preserved as a block by inner mappings, it remains to show that the restrictions of inner mappings to $A$ are homomorphisms. It is sufficient to check this for the generators $T_x$, $L_{x,y}$, $R_{x,y}$:
\begin{align*}
    T_x(a)T_x(b) &=T_x(a)((xb)/x) =(T_x(a)\cdot xb)/x                                    &\text{by Lemma \ref{Lm:useful}}\\
        &=(T_{xb}(a)\cdot xb)/x                                                             &\text{by (A2) with $W=T_x$}\\
        &=(xb\cdot a)/x = (x\cdot ba)/x                                                     &\text{by $[x,b,a]=1$}\\
        &=(x\cdot ab)/x                                                                     &\text{by $[a,b]=1$}\\
        &=T_x(ab),\\
    R_{x,y}(a)R_{x,y}(b) &= R_{x,y}(a)((by\cdot x)/(yx)) = (R_{x,y}(a)(by\cdot x))/(yx)     &\text{by Lemma \ref{Lm:useful}}\\
        &=(R_{x,by}(a)(by\cdot x))/(yx)                                                     &\text{by (A2) with $W=R_{x,y}$}\\
        &=((a\cdot by)x)/(yx)=((ab\cdot y)x)/(yx)                                           &\text{by $[a,b,y]=1$}\\
        &=R_{x,y}(ab),\\
    L_{x,y}(a)L_{x,y}(b) &= ((xy)\ldiv (x\cdot ya))L_{x,y}(b) = (xy)\ldiv ((x\cdot ya)L_{x,y}(b))    &\text{by Lemma \ref{Lm:useful}}\\
        &= (xy)\ldiv ((x\cdot ya)L_{x,ya}(b))                                               &\text{by (A2) with $W=L_{x,y}$}\\
        &= (xy)\ldiv (x(ya\cdot b)) = (xy)\ldiv (x(y\cdot ab))                              &\text{by $[y,a,b]=1$}\\
        &=L_{x,y}(ab).
\end{align*}

(A3) $\Rightarrow$ (A4): We see immediately that $A$ is an abelian group, and we will retain the multiplicative notation for $A$ here. Let $F$ be a transversal to $A$ in $Q$ containing $1$. Define new multiplication $\circ$ on $F$ by letting $x\circ y$ be the unique element of $F$ such that $A(x\circ y) = A(xy)$. Consider the bijection $g:F\to Q/A$, $x\mapsto Ax$. Then $g(x\circ y) = A(x\circ y) = A(xy) = Ax\cdot Ay = g(x)g(y)$, so $(F,\circ)$ is a loop isomorphic to $Q/A$. Every element of $Q$ has a unique decomposition $ax$ with $a\in A$ and $x\in F$, hence the mapping $f:A\times F\to Q$, $(a,x)\mapsto ax$ is a bijection. We will find a loop cocycle $\Gamma = (\varphi,\psi,\theta)$ such that $f$ becomes an isomorphism $\abext{A}{(F,\circ)}{\Gamma}\to Q$.
For every $x$, $y\in Q$ let
$$\varphi_{x,y}=R_{y,x}\restr{A},\quad \psi_{x,y}= (R_{xy}^{-1}L_xR_y)\restr{A},\quad \theta_{x,y} = (xy)/(x\circ y).$$
We clearly have $\theta_{x,y}\in A$, and, by (A3), $\varphi_{x,y}$, $\psi_{x,y}\in\aut{A}$. Note that $\varphi_{x,1}=\psi_{1,x}=\id$ and $\theta_{x,1}=\theta_{1,x}=1$, so $\Gamma$ is a loop cocycle. Our goal is to establish the equality of
\begin{displaymath}
    f(a,x)f(b,y)=ax\cdot by\quad\text{and}\quad f((a,x)(b,y)) = (\varphi_{x,y}(a)\psi_{x,y}(b)\theta_{x,y})\cdot (x\circ y)
\end{displaymath}
for every $a$, $b\in A$ and $x$, $y\in F$. Note that no additional parentheses are needed in the last expression since $A$ is associative. Now,
\begin{align*}
    ax\cdot by&= ((ax\cdot by)/(x\cdot by))\cdot (x\cdot by) = ([a,x,by]a)(x\cdot by)\\
        & = ([a,x,y]a)(x\cdot by)                                      &\text{by ``$[a,x,u] = [a,x,v]$''}\\
        & = \varphi_{x,y}(a)(x\cdot by) = \varphi_{x,y}(a)\cdot ((x\cdot by)/(xy))(xy)\\
        & = \varphi_{x,y}(a)\cdot(\psi_{x,y}(b)(xy)) = (\varphi_{x,y}(a)\psi_{x,y}(b))\cdot(xy).    &\text{by ``$[a,b,x] = 1$''.}
\end{align*}
Therefore
\begin{align*}
    (ax\cdot by)/(x\circ y) &= (\varphi_{x,y}(a)\psi_{x,y}(b)\cdot (xy))/(x\circ y) \\
                            &= \varphi_{x,y}(a)\psi_{x,y}(b)\cdot ((xy)/(x\circ y)) &\text{by Lemma \ref{Lm:useful}}\\
                            &= \varphi_{x,y}(a)\psi_{x,y}(b)\theta_{x,y},
\end{align*}
as desired.

(A4) $\Rightarrow$ (A1) Let $Q=\abext{A}{F}{\Gamma}$, where $\Gamma = (\varphi,\psi,\theta)$ is a loop cocycle. Note that $(a,x)/(b,y)\in A\times 1$ if and only if $x=y$, hence it is sufficient to show that $W_{(a_1,x_1),\dots,(a_n,x_n)}(c,1)$ is independent of $a_1$, $\dots$, $a_n$ for every tot-inner word $W$, every $(a_i,x_i)\in Q$ and every $c\in A$. In fact, it suffices to check this condition for the standard generators of $\tinn{Q}$. A lengthy but straightforward calculation yields
\begin{align*}
    T_{(a,x)}(c,1)&=(a+\psi_{x,1}(c),x)\,/\,(a,x)=(\varphi_{1,x}^{-1}\psi_{x,1}(c),1),\\
    L_{(a,x),(b,y)}(c,1)&=(\varphi_{x,y}(a)+\psi_{x,y}(b)+\theta_{x,y},\,xy)\,\ldiv\,(\varphi_{x,y}(a)+\psi_{x,y}(b+\psi_{y,1}(c))+\theta_{x,y},\,xy)\\
        &=(\psi_{xy,1}^{-1}\psi_{x,y}\psi_{y,1}(c),1),\\
    R_{(a,x),(b,y)}(c,1)&=(\varphi_{y,x}(\varphi_{1,y}(c)+b) + \psi_{y,x}(a) + \theta_{y,x},\,yx)/(\varphi_{y,x}(b)+\psi_{y,x}(a)+\theta_{y,x},\,yx)\\
        &=(\varphi_{1,yx}^{-1}\varphi_{y,x}\varphi_{1,y}(c),\,1),\\
    U_{(a,x)}(c,1)&=(a-\varphi_{1,x}(c),x)\,/\,(a,x)=(-c,1),\\
    M_{(a,x),(b,y)}(c,1)&=(\psi_{y,y\ldiv x}^{-1}(a-\varphi_{y,y\ldiv x}(b)-\theta_{y,y\ldiv x}),\,y\ldiv x)\\
        &\phantom{===\ }/(\psi_{y,y\ldiv x}^{-1}(a-\varphi_{y,y\ldiv x}(b-\varphi_{1,y}(c))-\theta_{y,y\ldiv x}),\,y\ldiv x)\\
        &=(-\varphi_{1,y\ldiv x}^{-1}\psi_{y,y\ldiv x}^{-1}\varphi_{y,y\ldiv x}\varphi_{1,y}(c),1).
\end{align*}
\end{proof}

Using Theorem \ref{Th:main_abelian}, we can easily deduce the following well-known characterization of central subloops, formatted in an analogous way in order to highlight the similarities between the two results.

\begin{theorem}\label{Th:main_central}
The following conditions are equivalent for a normal subloop $A$ of a loop $Q$:
\begin{enumerate}
\item[(C1)] $A$ is central in $Q$, that is, $[A,Q]_Q=1$.
\item[(C2)] $W_{\bar{u}}\restr{A}=\id$ for every inner word $W$ and every $u_i\in Q$.
\item[(C3)] $\varphi\restr{A}=\id$ for every $\varphi\in\inn{Q}$.
\item[(C3')] $[a,x]=[a,x,y]=[x,a,y]=[x,y,a]=1$ for every $a\in A$, $x$, $y\in Q$.
\item[(C4)] $Q$ is a central extension of $A$ by $Q/A$.
\end{enumerate}
\end{theorem}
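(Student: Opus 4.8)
The plan is to run a single cycle of implications that mirrors, step for step, the proof of Theorem~\ref{Th:main_abelian}, viewing centrality as the specialization of abelianness in which all of $Q$, rather than only $A$, is placed in the second slot of the commutator. Concretely I would prove (C1)$\Rightarrow$(C2)$\Leftrightarrow$(C3)$\Rightarrow$(C4)$\Rightarrow$(C1), with (C3)$\Leftrightarrow$(C3') as a side equivalence. For (C1)$\Rightarrow$(C2) I apply Theorem~\ref{Th:mainSV} with $B=Q$; the side condition $u_i/v_i\in B$ then becomes vacuous, so $[A,Q]_Q=1$ is equivalent to $W_{\bar u}(a)$ being independent of $\bar u$ for every tot-inner word $W$ and every $a\in A$. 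For an ordinary inner word, substituting $\bar u=\bar 1$ sends every letter $L_{t(\bar 1)}=L_1$ and $R_{t(\bar 1)}=R_1$ to the identity, so $W_{\bar 1}=\id$; hence independence of $\bar u$ is the same as $W_{\bar u}\restr A=\id$, which is exactly (C2). This is the verbatim analogue of (A1)$\Rightarrow$(A2) with the constraint dropped.

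The equivalence (C2)$\Leftrightarrow$(C3) is immediate, since $\inn Q$ is precisely the set of maps $W_{\bar u}$ with $W$ an inner word and normality of $A$ ensures that composites of maps fixing $A$ pointwise again fix $A$ pointwise. For (C3)$\Leftrightarrow$(C3') I would translate pointwise fixing into the vanishing of the commutators and associators by feeding the standard generators of $\inn Q$ into the identities $[y,x]=T_y(x)/x$ and $[x,y,z]=R_{z,y}(x)/x$: the condition $[a,x]=1$ amounts to $T_x(a)=a$, the condition $[a,x,y]=1$ to $R_{y,x}(a)=a$, and $[x,y,a]=1$ to $L_{x,y}(a)=a$, while $[x,a,y]=1$ matches the inner word $L_x^{-1}R_y^{-1}L_xR_y$ fixing $a$. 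Reading these equivalences forward gives (C3)$\Rightarrow$(C3'), and checking them on the generators $T_x$, $L_{x,y}$, $R_{x,y}$ gives the converse.

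The role of Theorem~\ref{Th:main_abelian} enters at (C3)$\Rightarrow$(C4). Since $[A,A]_Q\le[A,Q]_Q$, centrality implies abelianness, so the construction of (A3)$\Rightarrow$(A4) already presents $Q$ as an abelian extension $\abext{A}{(Q/A)}{\Gamma}$ with $\varphi_{x,y}=R_{y,x}\restr A$ and $\psi_{x,y}=(R_{xy}^{-1}L_xR_y)\restr A$. As both $R_{y,x}$ and $R_{xy}^{-1}L_xR_y$ lie in $\inn Q$, condition (C3) forces $\varphi_{x,y}=\psi_{x,y}=\id$, i.e. $\Gamma$ is a central cocycle, which is (C4). (The same structure, read through Lemma~\ref{Lm:form}, also gives the easy (C4)$\Rightarrow$(C3): in a central extension $\gamma_x=\id$ and $c_1=0$ for $\gamma\in\inn Q$, so $\gamma(a,1)=(a,1)$.) To close the cycle I would derive (C4)$\Rightarrow$(C1) directly from the five tot-inner formulas already computed in (A4)$\Rightarrow$(A1): setting $\varphi=\psi=1$ there makes $T$, $U$, $L$, $R$, $M$ send $(c,1)$ to $(c,1)$, $(-c,1)$, $(c,1)$, $(c,1)$, $(-c,1)$ respectively, each depending only on $c$ and not on the inserted arguments, so Theorem~\ref{Th:mainSV} yields $[A,Q]_Q=1$.

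I expect the only delicate point to be the bookkeeping in (C3)$\Leftrightarrow$(C3'), in particular matching $[x,a,y]=1$ to the commutator word $L_x^{-1}R_y^{-1}L_xR_y$ rather than to a single standard generator, and keeping straight that centrality is genuinely stronger than abelianness precisely because it collapses the automorphism part $\varphi,\psi$ of the cocycle to the identity; once that is noted, every step is a direct specialization of the corresponding step for abelian subloops.
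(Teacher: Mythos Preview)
Your proposal is correct and follows essentially the same route as the paper's proof: the same cycle (C1)$\Rightarrow$(C2)$\Rightarrow$(C3)$\Rightarrow$(C4)$\Rightarrow$(C1), with (C3)$\Leftrightarrow$(C3') handled separately, and the implications (C3)$\Rightarrow$(C4) and (C4)$\Rightarrow$(C1) obtained by specializing the corresponding steps of Theorem~\ref{Th:main_abelian} to $\varphi=\psi=1$. The only noteworthy difference is in extracting $[x,a,y]=1$ from (C3): you read it off directly from the inner word $L_x^{-1}R_y^{-1}L_xR_y$ (which is indeed inner, since $(xy)/y=x$), whereas the paper instead derives $(xa)y=(ax)y=a(xy)=(xy)a=x(ya)=x(ay)$ from the other three identities---both arguments are valid and equally short.
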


\begin{proof}
(C1) $\Rightarrow$ (C2) follows from Theorem \ref{Th:mainSV} and from the observation that with $u_i=1$ we always have $W_{\bar u}(a)=a$.
To show (C2) $\Rightarrow$ (C3), observe that every $\varphi\in\inn{Q}$ can be expressed as a word in the standard generators $T_x$, $L_{x,y}$, $R_{x,y}$.

Next, we prove that (C3) $\Leftrightarrow$ (C3').
Let $a\in A$, $x$, $y\in Q$. We have
\begin{align*}
T_x(a)=a \quad&\text{iff}\quad T_x(a)/a=[x,a]=1 \quad\text{iff}\quad [a,x]=1,\\
R_{y,x}(a)=a \quad&\text{iff}\quad R_{y,x}(a)/a = [a,x,y] = 1,\\
L_{x,y}(a)= (xy)\ldiv(x\cdot ya) = a \quad&\text{iff}\quad x\cdot ya = xy\cdot a \quad\text{iff}\quad [x,y,a]=1.
\end{align*}
This proves (C3') $\Rightarrow$ (C3), since $T_x$, $R_{x,y}$ and $L_{x,y}$ generate $\inn{Q}$. To finish (C3) $\Rightarrow$ (C3'), we proceed as usual and obtain $(xa)y = (ax)y = a(xy) = (xy)a = x(ya) = x(ay)$, that is, $[x,a,y]=1$.

Now, if both (C3) and (C3') hold, condition (A3) of Theorem \ref{Th:main_abelian} is satisfied, too, and we can follow the proof (A3) $\Rightarrow$ (A4) therein. Since the inner mappings $\varphi_{x,y}$, $\psi_{x,y}$ are now identical on $A$, we have established (C4).

Finally, if (C4) holds, the calculation in the proof of (A4) $\Rightarrow$ (A1) combined with the fact that $\varphi=\psi=1$ imply that the mappings $T_x$, $L_{x,y}$, $R_{x,y}$ are identical on $A$, and the mappings $U_x$ and $M_{x,y}$ satisfy $U_x(a)=M_{x,y}(a)=-a$ for every $a\in A$. Thus $W_{\bar u}(a)=W_{\bar v}(a)$ for every tot-inner word $W$ and every $a\in A$, $u_i$, $v_i\in Q$, and (C1) follows from Theorem \ref{Th:mainSV}.
\end{proof}

We conclude this section with several remarks on the statement and proof of Theorems \ref{Th:main_abelian} and \ref{Th:main_central}.

\subsubsection*{Optimality of the syntactic conditions}

In the proof of (A3) $\Rightarrow$ (A4) we seemingly did not use the fact that $\varphi\restr{A}\in\aut{A}$ for every $\varphi\in\inn{Q}$, only that $\varphi_{x,y}=R_{y,x}$, $\psi_{x,y}= R_{xy}^{-1}L_xR_y$ have this property. But since $\inn{Q} = \langle \varphi_{x,y},\psi_{x,y}:x,y\in Q\rangle$ by \cite[Proposition 3.2]{SV}, we did in fact use the assumption in full.

The identity $[x,a,y]=1$ can be removed from condition (C3'), as follows from the proof of Theorem \ref{Th:main_central}. (In fact, any one of the three associator identities can be removed from (C3').) Note that condition (C3') describes equationally the fact that $A\leq Z(Q)$.

We failed to find a compact, purely associator-commutator characterization of abelian normal subloops in the spirit of condition (C3'), but the following discussion of (A3) suggests that no such characterization might exist. Let us label the conditions in (A3) as follows: i) $\varphi\restr{A}\in\aut{A}$ for every $\varphi\in\inn{Q}$, ii) $[a,b]=1$, iii) $[a,b,x]=1$, iv) $[a,x,b]=1$, v) $[x,a,b]=1$, vi) $[a,x,u]=[a,x,v]$, with variables quantified as in (A3).

The condition iv) is an obvious consequence of vi) and can therefore be removed from (A3).

The condition i) by itself is rather weak. Recall that a loop $Q$ is said to be \emph{automorphic} if $\inn{Q}\le\aut{Q}$, see \cite{BP} and \cite{KKPV}. There exist (commutative) automorphic loops that are not associative. Taking such a loop $Q$ and letting $A=Q$, we see that i) does not imply any of the remaining conditions, in fact, even i) with ii) do not imply any of the remaining conditions.

On the other hand, condition i) cannot be removed from (A3). As in \cite[Section 9]{SV}, for an abelian group $(G,+)$ and a quasigroup $(G,\oplus)$ define $Q=G[\oplus]$ to be the loop on $G\times \Z_2$ with multiplication
\begin{displaymath}
    (x,a)(y,b)=\left\{
	\begin{array}{ll}
		(x+y,a+b)  & \text{if $a=0$ or $b=0$,}\\
		(x\oplus y,0) & \text{otherwise.}
	\end{array}\right.
\end{displaymath}
With $G=\Z_2^2$, there is a loop $Q=G[\oplus]$ and a normal subloop $A=G\times 0$ of $Q$ that satisfies ii)--vi) but in which i) fails.

Any non-commutative group $Q$ with $A=Q$ satisfies i), iii)--vi) but not ii), so ii) cannot be removed from (A3). There is a loop $Q$ of the form $\mathbb Z_4[\oplus]$ that satisfies i)--v) but not vi), so vi) cannot be removed from (A3).

We do not know whether iii) or v) can be removed from (A3). However, there is an $8$-element example with $A=\mathbb Z_4$ that satisfies ii), iv), v), vi) but not iii), and another such example that satisfies ii), iii), iv), vi) but not v). 

\subsubsection*{Inner mappings, or tot-inner mappings?}

As we mentioned earlier, while developing the commutator theory for loops, it proved useful to work with tot-inner mappings and tot-inner words.
To date, we do not know whether Theorem \ref{Th:mainSV} remains true if ``tot-inner'' and ``$\tinn Q$'' are replaced by ``inner'' and ``$\inn Q$'' in the condition imposed on the set $\W$ that is used for generating the commutator $[A,B]_Q$. Theorems \ref{Th:main_abelian} and \ref{Th:main_central} show that it is sufficient to consider inner mappings for certain special types of commutators, namely $[A,A]_Q$ and $[A,Q]_Q$. (This is the equivalence of conditions (A1), (A2) and (C1), (C2), respectively.)

It turns out that for a normal subloop $A$ that is abelian in $Q$ we have $\varphi\restr{A}\in\aut{A}$ for every tot-inner mapping $\varphi$, too, not just for inner mappings. Indeed, suppose that the equivalent conditions of Theorem \ref{Th:main_abelian} hold, and let $a$, $b\in A$ and $x$, $y\in Q$. First note that the condition $[a,b]=[a,b,x]=[a,x,b]=[x,a,b]=1$ implies $(ab)\ldiv x=a\ldiv(b\ldiv x)=b\ldiv(a\ldiv x)$, which we will use freely. Then
\begin{align*}
    U_x(a)U_x(b) &=U_x(a)\cdot((b\ldiv x)/x) =(U_x(a)\cdot(b\ldiv x))/x                 &\text{by Lemma \ref{Lm:useful}}\\
        &=(U_{b\ldiv x}(a)\cdot(b\ldiv x))/x                                            &\text{by (A1) with $W=U_x$}\\
        &=(a\ldiv(b\ldiv x))/x = ((ab)\ldiv x)/x=U_x(ab),  \\
    M_{x,y}(a)M_{x,y}(b)&=M_{x,y}(a)M_{x,a\ldiv y}(b)                                   &\text{by (A1) with $W=M_{x,y}$}\\
        &=M_{x,y}(a)\cdot(((a\ldiv y)\ldiv x)/((b\ldiv(a\ldiv y))\ldiv x))\\
        &=(M_{x,y}(a)((a\ldiv y)\ldiv x))/((b\ldiv(a\ldiv y))\ldiv x)              &\text{by Lemma \ref{Lm:useful}}\\
        &=(y\ldiv x)/((b\ldiv (a\ldiv y))\ldiv x)\\
        &=(y\ldiv x)/((ab\ldiv y)\ldiv x)=M_{x,y}(ab).
\end{align*}

\subsubsection*{Choosing associators and commutators}

The particular form of the associator $[x,y,z]$ comes into play only in the statement of condition (A3), namely in the identity $[a,x,u]=[a,x,v]$. In all other instances the identities are of the form $[x,y,z]=1$ with various constraints on the variables, which have the same meaning for all reasonably defined associators.

We could have chosen the associator $[x,y,z]$ arbitrarily as long as we have maintained the requirement that it is a deviation of an inner mapping from the identity mapping---this is crucial to obtain the automorphic property of $\varphi_{x,y}$ in the proof of (A3) $\Rightarrow$ (A4).

The particular form of the commutator $[x,y]$ never comes into play in the statement of the theorems.

\subsubsection*{On the loop cocycle condition}

The mapping $\varphi_{x,y} = R_{y,x}$ in the implication (A3) $\Rightarrow$ (A4) satisfies not only $\varphi_{y,1}=\id$ but also $\varphi_{1,y}=\id$. We could therefore strengthen the definition of a loop cocycle by demanding also $\varphi_{1,y}=\id$. We decided not to do this for symmetry reasons, and also to match our definition with that of \cite{MTLBD}.

\begin{problem}
Does Theorem \ref{Th:mainSV} remain true if ``tot-inner'' and ``$\tinn Q$'' are replaced by ``inner'' and ``$\inn Q$'' in the condition imposed on the set $\W$ that is used for generating the commutator $[A,B]_Q$?
\end{problem}

\section{Solvability and nilpotence}\label{Sec:sn}

Call a loop $Q$ an \emph{iterated abelian extension} (resp. \emph{iterated central extension}) if it is either an abelian group, or it is an abelian extension (resp. central extension) $Q = \abext{A}{F}{\Gamma}$ of an abelian group $A$ by an iterated abelian extension (resp. iterated central extension) $F$. Thanks to the fact that abelian extensions of loops are precisely the affine quasidirect products of \cite{MTLBD}, our iterated abelian extensions are precisely the \emph{polyabelian loops} of \cite{MTLBD}.

The definitions of solvability and nilpotence can be rephrased in the following way. A loop $Q$ is \emph{congruence solvable} (resp. \emph{centrally nilpotent}) if it is either an abelian group, or if it contains a normal subloop $A$ that is abelian in $Q$ (resp. central in $Q$) and $Q/A$ is congruence solvable (resp. centrally nilpotent). As an immediate consequence of Theorems \ref{Th:main_abelian} and \ref{Th:main_central} we obtain:

\begin{corollary}\label{Cr:iterated_abelian}
A loop is congruence solvable if and only if it is an iterated abelian extension.
\end{corollary}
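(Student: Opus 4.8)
The plan is to prove Corollary \ref{Cr:iterated_abelian} by a straightforward induction on the loop order (or on the length of a solvable series), using the equivalence (A1)~$\Leftrightarrow$~(A4) of Theorem \ref{Th:main_abelian} as the engine that converts ``abelian normal subloop'' into ``abelian extension'' and back. The statement to prove is a biconditional, so I would split it into the two implications and handle each by induction, with the base case in both directions being the abelian group, which is both congruence solvable and (vacuously) an iterated abelian extension by definition.

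For the forward implication, I would take a congruence solvable loop $Q$ that is not an abelian group. By the recursive rephrasing of congruence solvability given just before the corollary, $Q$ contains a normal subloop $A$ that is abelian in $Q$ with $Q/A$ congruence solvable. The inductive hypothesis applies to $Q/A$ (which has smaller order), so $Q/A$ is an iterated abelian extension. Now I invoke (A1)~$\Rightarrow$~(A4) of Theorem \ref{Th:main_abelian}: since $A$ is abelian in $Q$, it is in particular a commutative group, and $Q$ is an abelian extension $\abext{A}{Q/A}{\Gamma}$ of the abelian group $A$ by $Q/A$. Because $Q/A$ is an iterated abelian extension, the very definition of iterated abelian extension (an abelian extension of an abelian group by an iterated abelian extension) shows that $Q$ is an iterated abelian extension.

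For the converse, I would take an iterated abelian extension $Q$ that is not an abelian group, so by definition $Q=\abext{A}{F}{\Gamma}$ where $A$ is an abelian group and $F$ is an iterated abelian extension. By the inductive hypothesis $F$ is congruence solvable. The key point is to identify the relevant normal subloop inside $Q$: the copy $A\times\{1\}$ is normal in $Q$ (it is the kernel of the projection onto $F$) and $Q/(A\times\{1\})\cong F$. Applying (A4)~$\Rightarrow$~(A1), the fact that $Q$ is realized as an abelian extension of $A$ by $Q/(A\times 1)$ gives that $A\times\{1\}$ is abelian in $Q$. Since the quotient $F$ is congruence solvable, the recursive definition of congruence solvability then yields that $Q$ is congruence solvable.

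I expect no genuine obstacle here; the corollary is essentially a bookkeeping translation once Theorem \ref{Th:main_abelian} is in hand. The only point requiring a little care is the bidirectional matching of the two recursive definitions: one must check that ``abelian extension of an abelian group by an iterated abelian extension'' (the definition of iterated abelian extension) lines up exactly with ``normal subloop abelian in $Q$ with congruence solvable quotient'' (the definition of congruence solvability). Theorem \ref{Th:main_abelian} supplies precisely this dictionary, identifying abelianness of $A$ in $Q$ with $Q$ being an abelian extension of $A$ by $Q/A$, so the induction closes cleanly in both directions.
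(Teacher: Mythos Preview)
Your proposal is correct and matches the paper's approach: the paper treats the corollary as an immediate consequence of Theorem~\ref{Th:main_abelian} and the recursive rephrasing of congruence solvability, and your argument is precisely the natural unpacking of that claim via induction and the equivalence (A1)~$\Leftrightarrow$~(A4). The only small remark is that your parenthetical ``which has smaller order'' presumes finiteness; since the corollary is stated for arbitrary loops, you should commit to the induction on the length of the solvable series (or depth of the iterated extension), which you already mention as the alternative.
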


\begin{corollary}\label{Cr:iterated_central}
A loop is centrally nilpotent if and only if it is an iterated central extension.
\end{corollary}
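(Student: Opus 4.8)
The plan is to deduce Corollary \ref{Cr:iterated_central} as a direct parallel to Corollary \ref{Cr:iterated_abelian}, replacing the word ``abelian'' by ``central'' throughout and invoking Theorem \ref{Th:main_central} in place of Theorem \ref{Th:main_abelian}. The statement asserts the equivalence of two inductively defined classes of loops: the centrally nilpotent loops, as rephrased just before the corollary, and the iterated central extensions, as defined at the start of Section \ref{Sec:sn}. Since both definitions are recursive with the same base case (an abelian group) and the same length of induction, the natural proof is by induction on the size of the loop $Q$, peeling off one normal subloop at each step.

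First I would dispose of the base case: if $Q$ is an abelian group, then it is centrally nilpotent and an iterated central extension by definition, so the two notions agree trivially. For the inductive step, I would argue both directions. If $Q$ is centrally nilpotent and not an abelian group, then by the rephrased definition there is a normal subloop $A$ that is central in $Q$ with $Q/A$ centrally nilpotent. By Theorem \ref{Th:main_central}, the equivalence (C1) $\Leftrightarrow$ (C4) shows that $A$ central in $Q$ is precisely the statement that $Q$ is a central extension $\abext{A}{Q/A}{\Gamma}$ of the abelian group $A$ by $Q/A$. Since $Q/A$ is a smaller centrally nilpotent loop, the induction hypothesis makes it an iterated central extension, and hence $Q$ itself is an iterated central extension by the recursive definition. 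The converse direction reverses this reasoning: if $Q = \abext{A}{F}{\Gamma}$ is a central extension with $F$ an iterated central extension, then $A$ is central in $Q$ by (C4) $\Rightarrow$ (C1) of Theorem \ref{Th:main_central}, and $F \cong Q/A$ is centrally nilpotent by induction, so $Q$ is centrally nilpotent.

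The one point requiring a moment of care is the identification $F \cong Q/A$ in the central extension, so that the inductive hypothesis applied to $F$ transfers to $Q/A$ and vice versa. This is exactly the content guaranteed by Theorem \ref{Th:main_central}: condition (C4) speaks of a central extension of $A$ by $Q/A$, matching the recursive definition verbatim once we know $A$ is central in $Q$. I expect no genuine obstacle here, since the whole force of the corollary has been loaded into Theorem \ref{Th:main_central}; the proof is a formal unwinding of the two recursive definitions against the equivalence (C1) $\Leftrightarrow$ (C4). The only thing to watch is that the ``either an abelian group, or\dots'' phrasing of both definitions lines up, so that the induction terminates at the same base case and the factor loops shrink strictly, guaranteeing the induction is well-founded.
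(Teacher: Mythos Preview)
Your proposal is correct and matches the paper's approach: the paper gives no proof at all, merely stating that both corollaries are immediate consequences of Theorems \ref{Th:main_abelian} and \ref{Th:main_central}, and your argument is precisely the natural unwinding of the recursive definitions against the equivalence (C1) $\Leftrightarrow$ (C4).

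One small technical point: you phrase the induction as being on the \emph{size} of $Q$, which tacitly assumes $Q$ is finite. The paper does not impose finiteness, and the recursive definitions make sense for infinite loops as well. The correct induction parameter is the depth of the recursion (equivalently, the length $n$ of a central series $1=A_0\le A_1\le\cdots\le A_n=Q$), which is finite by definition in both the ``centrally nilpotent'' and ``iterated central extension'' clauses. With that adjustment, your ``factor loops shrink strictly'' remark becomes the statement that the depth decreases by one when passing from $Q$ to $Q/A$ (or from $Q$ to $F$), and the well-foundedness you flag at the end is automatic.
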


Recall once again that the traditional notion of solvability in loop theory, which we call classical solvability here, is weaker than congruence solvability. On the other hand, a stronger concept of nilpotence, not directly related to centrality, has recently emerged in universal algebra. In \cite{Bu}, Bulatov introduced higher commutators and the related notion of nilpotence, called \emph{supernilpotence}. A theory of higher commutators for algebras with a Mal'tsev term was developed by Aichinger and Mudrinski \cite{AM}. We will not go into details here but allow us to state the following result of \cite{AM} that applies to finite loops: \emph{A finite algebra with a Mal'tsev term is supernilpotent if and only if it is a direct product of centrally nilpotent algebras of prime power size.}

One can view the four notions (classical solvability, congruence solvability, central nilpotence and supernilpotence), together with their class, as a hierarchy of properties, with \emph{abelianess} (defined as nilpotence of class $1$ or solvability of class $1$) at the top. Indeed, a loop is abelian in this sense if and only if it is an abelian group.

Many results in loop theory relate solvability and nilpotence of loops to the analogous properties of their associated permutation groups, the multiplication group and the inner mapping group. We conclude the paper with a summary of known results in this area, and we point out how the new concepts of congruence solvability and supernilpotence fit into the hierarchy.

\begin{figure}
\begin{tikzpicture}[scale=0.1]
\node at (0,10) {$Q$};
\node at (40,10) {$\mlt{Q}$};
\node at (80,10) {$\inn{Q}$};
\draw[fill] (0,0) circle [radius=1];        
\draw[fill] (40,0) circle [radius=1];       
\draw[fill] (80,-10) circle [radius=1];     
\draw[dotted] (-35,-15)--(100,-15);         
\draw[fill] (0,-25) circle [radius=1];      
\draw[fill] (40,-25) circle [radius=1];     
\draw[fill] (80,-35) circle [radius=1];     
\draw[dotted] (-35,-35)--(10,-35);          
\draw[fill] (0,-45) circle [radius=1];      
\draw[dotted] (-35,-55)--(100,-55);         
\draw[fill] (0,-60) circle [radius=1];      
\draw[fill] (40,-60) circle [radius=1];     
\draw[dotted] (-35,-65)--(10,-65);          
\draw[fill] (0,-75) circle [radius=1];      
\draw[fill] (80,-75) circle [radius=1];     
\draw[dotted] (10,5)--(10,-80);             
\draw (0,0)--(40,0)--(80,-10);              
\draw (40,0)--(40,-25);                     
\draw (80,-10)--(80,-35);                   
\draw (40,-25)--(80,-35);                   
\draw[dashed] (0,-25) to node [above] {\emph{Wright}} (40,-25);             
\draw (40,-25) to node [sloped,above] {\emph{Bruck}} (0,-45);               
\draw[dashed] (80,-35) to node [sloped,above] {\emph{Niemenmaa}} (0,-45);   
\draw (0,0)--(0,-75);                       
\draw (0,-45) to node [sloped,above] {\emph{Bruck}} (40,-60);               
\draw[dashed] (40,-60) to node [sloped,above] {\emph{Vesanen}} (0,-75);     
\draw (40,-60)--(80,-75);                   
\draw (80,-35)--(80,-75);                   
\node at (-5,-5) [left] {abelian};
\node at (-5,-25) [left] {supernilpotent};
\node at (-5,-45) [left] {centrally nilpotent};
\node at (-5,-60) [left] {congruence solvable};
\node at (-5,-75) [left] {classically solvable};
\node at (85,-5) [right] {abelian};
\node at (85,-35) [right] {nilpotent};
\node at (85,-65) [right] {solvable};
\end{tikzpicture}
\caption{Abelianess, nilpotence and solvability of loops and the associated permutation groups.}
\label{Fig:bigpic}
\end{figure}
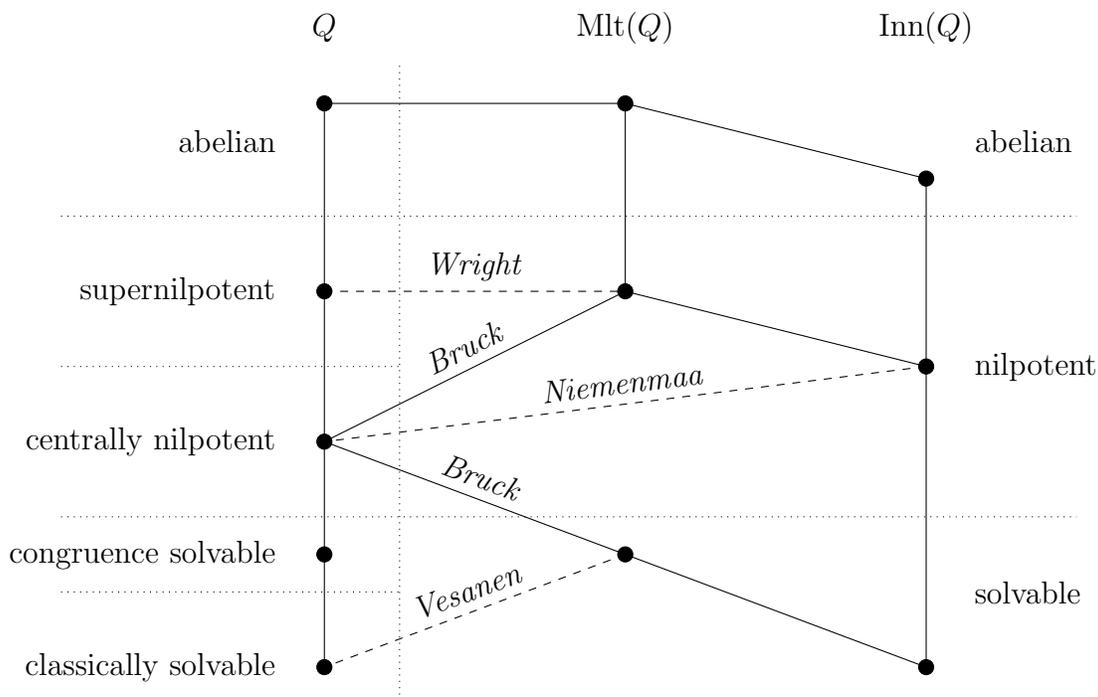

The known implications among these properties are summarized in Figure \ref{Fig:bigpic}. A horizontal edge means equivalence, slanted or vertical edges designate implications, with stronger notions higher up. A dashed edge means that a proof is known only for finite loops. Labeled edges are nontrivial theorems, the remaining edges are easy facts. We are not aware of infinite counterexamples to the theorems that assume finiteness.
The references for Figure \ref{Fig:bigpic} are:
\begin{itemize}
	\item $\mlt Q$ nilpotent $\Rightarrow$ $Q$ centrally nilpotent: \cite[Section 8]{Br46},
	\item $Q$ centrally nilpotent $\Rightarrow$ $\mlt Q$ solvable: \cite[Section 8]{Br46},
	\item for $Q$ finite, $\mlt Q$ nilpotent $\Leftrightarrow$ $Q$ supernilpotent: \cite{W},
	\item for $Q$ finite, $\inn Q$ nilpotent $\Rightarrow$ $Q$ centrally nilpotent: \cite{N}, building on \cite{Ma},
	\item for $Q$ finite, $\mlt Q$ solvable $\Rightarrow$ $Q$ classically solvable: \cite{V}.
\end{itemize}
And here is some information about the counterexamples in Figure \ref{Fig:bigpic}:
\begin{itemize}
	\item $Q$ centrally nilpotent $\not\Rightarrow$ $Q$ supernilpotent or $\mlt Q$ nilpotent: any nilpotent loop that is not a direct product of loops of prime power order (there is one of order $6$),
	\item $Q$ centrally nilpotent $\not\Rightarrow$ $\inn Q$ nilpotent: certain loop of order 16 and nilpotence class 3,
	\item $\mlt Q$ solvable $\not\Rightarrow$ $Q$ congruence solvable: certain non-abelian extension of $\Z_4$ by $\Z_2$, see \cite{SV},
	\item $Q$ congruence solvable $\not\Rightarrow$ $\inn Q$ solvable: certain abelian extension of $\Z_2^3$ by $\Z_2$, see \cite{MTLBD} or Section \ref{Sec:extensions}.
\end{itemize}
Among the open problems related to Figure \ref{Fig:bigpic}, we would like to state explicitly one that has recently received considerable attention:

\begin{conjecture}
Let $Q$ be a loop. If $\inn Q$ is abelian, then $Q$ is centrally nilpotent of class at most $3$.
\end{conjecture}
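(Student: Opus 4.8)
The plan is to first secure central nilpotence and then fight for the numerical bound. Central nilpotence is already available for finite $Q$: since $\inn{Q}$ is abelian it is in particular nilpotent, so the implication ``$\inn Q$ nilpotent $\Rightarrow$ $Q$ centrally nilpotent'' recorded in Figure \ref{Fig:bigpic} (Niemenmaa) shows that $Q$ is centrally nilpotent of some finite class $n$. The whole content of the conjecture is therefore the inequality $n\le 3$, that is, $Z_3(Q)=Q$, equivalently $\gamma_4(Q)=[[[Q,Q]_Q,Q]_Q,Q]_Q=1$; phrased through the lower central series, this says exactly that $\gamma_3(Q)=[[Q,Q]_Q,Q]_Q$ is central in $Q$.

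To exploit the hypothesis I would translate ``$\inn Q$ is abelian'' into loop identities. Using the generation $\inn{Q}=\langle T_x,L_{x,y},R_{x,y}\rangle$, abelianness is equivalent to the commuting relations $\varphi\psi=\psi\varphi$ for all pairs $\varphi,\psi$ of these generators. Evaluating each such relation at a point and subtracting off the base point, exactly as in the passage from inner mappings to commutators and associators throughout Section \ref{Sec:main}, produces a family of \emph{second-order} commutator--associator identities, morally ``the associator of an associator vanishes.'' The key technical step is to organize these identities so that they say precisely that $\gamma_3(Q)$ is fixed pointwise by every inner mapping; by condition (C3) of Theorem \ref{Th:main_central} this is the same as $\gamma_3(Q)$ being central, which by the discussion above is the assertion $n\le 3$. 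One would try to read this off from the commuting relations applied \emph{twice}, feeding the first-order vanishing (that $[Q,Q]_Q$ behaves centrally modulo one more layer) back into Theorem \ref{Th:mainSV}.

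The hard part will be this last organizational step, and I do not expect it to be routine. The difficulty is structural rather than computational: the existence of a loop of nilpotence class exactly $3$ with abelian inner mapping group shows that the bound is sharp, so any argument must be delicate enough to halt at $3$, whereas naive manipulation of the commuting relations tends to prove either too little (bare nilpotence of unbounded class) or too much (class $\le 2$, which is false). For this reason the cleanest route is probably not purely syntactic but group-theoretic: pass to $G=\mlt Q$, in which the left and right translations form $\inn Q$-connected transversals to $H=\inn Q$, use the abelianness of $H$ to bound the nilpotence class of $G$, and then transfer the bound back to $Q$ via the inequality relating the class of a centrally nilpotent loop to that of its multiplication group and via Theorem \ref{Th:main_central} (equivalently Corollary \ref{Cr:iterated_central}). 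I expect the genuinely new input to be needed exactly here---controlling $[G,H]$ and the terms $\gamma_i(G)$ under the single hypothesis that $H$ is abelian---and it is precisely this step that keeps the conjecture open.
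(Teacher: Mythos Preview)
The paper does not prove this statement: it is stated as a \emph{conjecture}, with only the remark that a finer version is discussed in \cite{KVV} and that Cs\"org\H{o}'s examples show the bound $3$ is sharp. There is therefore no proof in the paper for your proposal to be compared against.

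Your write-up is honest about this---you say yourself that the organizational step ``keeps the conjecture open''---so what you have is a strategy outline, not a proof. Two specific points are worth flagging. First, the Niemenmaa implication you invoke from Figure~\ref{Fig:bigpic} is only known for \emph{finite} loops (it is one of the dashed edges), so even the preliminary step ``$\inn Q$ abelian $\Rightarrow$ $Q$ centrally nilpotent'' is not available in the generality of the conjecture as stated. Second, the route you sketch through $G=\mlt Q$ and connected transversals to an abelian $H=\inn Q$ is exactly the circle of ideas around Niemenmaa--Mazur and the work referenced in \cite{KVV}; the obstruction you identify---controlling $\gamma_i(G)$ from abelianness of $H$ alone---is the known hard point, and no argument in this paper supplies it. So your proposal correctly locates the difficulty but does not resolve it, and neither does the paper.
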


A finer version of the conjecture is discussed in \cite{KVV}. Existence of loops (necessarily not associative) $Q$ of nilpotence class $3$ with abelian $\inn{Q}$ was first shown by Cs\"org\H{o} \cite{Cs}. See also \cite{DrVo} for a large class of examples.

\medskip

We finish with an observation that relates our results to a particular edge in Figure \ref{Fig:bigpic}. Recall the classical result of Bruck \cite[Corollary II after Theorem 8B]{Br46}: \emph{A centrally nilpotent loop $Q$ has solvable $\mlt Q$}. Note that this result follows immediately from our Proposition \ref{Pr:central_mlt} and Corollary \ref{Cr:iterated_central}. The theorem does not extend to congruence solvability, but the following problem is open:

\begin{problem}\label{Prob:inn_mlt_solvable}
Let $Q$ be a congruence solvable loop with $\inn{Q}$ solvable. Is $\mlt{Q}$ solvable?
\end{problem}

\begin{proposition}
If the answer to Problem \ref{Prob:inn_mlt_abelian} is positive than the answer to Problem \ref{Prob:inn_mlt_solvable} is also positive.
\end{proposition}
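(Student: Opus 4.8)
The plan is to assume that Problem \ref{Prob:inn_mlt_abelian} has a positive answer and to prove the statement of Problem \ref{Prob:inn_mlt_solvable} by induction on the length of $Q$ viewed as an iterated abelian extension, which is legitimate by Corollary \ref{Cr:iterated_abelian}. In the base case $Q$ is an abelian group, so $L_x=R_x$ for every $x$ and $\mlt Q\cong Q$ is abelian, hence solvable. For the inductive step I would write $Q=\abext AF\Gamma$ with $A$ an abelian group and $F$ a congruence solvable loop of strictly smaller iteration length, and assume $\inn Q$ is solvable.

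The first task is to extract from Lemma \ref{Lm:form} the projection $\pi\colon\mlt Q\to\mlt F$ sending $\gamma$, written in the normal form $\gamma(a,x)=(c_x+\gamma_x(a),C(x))$, to its base component $C$. The composition and inversion formulas of Lemma \ref{Lm:form} show that $\pi$ is a group homomorphism, and since the generators $L_{(b,y)}$, $R_{(b,y)}$ of $\mlt Q$ map to the generators $L_y$, $R_y$ of $\mlt F$, the map $\pi$ is onto. Because the projection $Q\to F$, $(a,x)\mapsto x$, intertwines the actions of $\gamma$ and $\pi(\gamma)$, every $\gamma\in\inn Q$ projects to a map fixing $1\in F$, so $\pi(\inn Q)\subseteq\inn F$.

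The key step will be to upgrade this to $\pi(\inn Q)=\inn F$, and here I would exploit the fiber translations lying in $\ker\pi$. Using the loop cocycle identities $\psi_{1,x}=\id$, $\theta_{1,x}=0$ and $\varphi_{1,1}=\id$ one computes $L_{(b,1)}(a,x)=(a+\varphi_{1,x}(b),x)$, so $L_{(b,1)}\in\ker\pi$ while $L_{(b,1)}(0,1)=(b,1)$. Thus, given any $C\in\inn F$, I would lift it to some $\gamma\in\mlt Q$ with $\pi(\gamma)=C$, write $\gamma(0,1)=(c_1,1)$, and set $\gamma'=L_{(c_1,1)}^{-1}\gamma$; then $\pi(\gamma')=C$ while $\gamma'(0,1)=(0,1)$, so $\gamma'\in\inn Q$. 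Hence $\inn F$ is a homomorphic image of $\inn Q$ and is therefore solvable.

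With $\inn F$ solvable and $F$ of smaller iteration length, the induction hypothesis (Problem \ref{Prob:inn_mlt_solvable} applied to $F$) yields that $\mlt F$ is solvable. Now both $\inn Q$ and $\mlt F$ are solvable, so the assumed positive answer to Problem \ref{Prob:inn_mlt_abelian} gives that $\mlt Q$ is solvable, completing the induction. Everything except the surjectivity $\pi(\inn Q)=\inn F$ is bookkeeping with the normal form of Lemma \ref{Lm:form}, and I expect the only genuine point of care to be the verification that the fiber translations $L_{(b,1)}$ lie in $\ker\pi$ and sweep out the whole base fiber $A\times\{1\}$, so that an arbitrary lift can be corrected back to the neutral element.
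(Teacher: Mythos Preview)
Your argument is correct and follows the same inductive skeleton as the paper's proof: establish that $\inn F$ is solvable as a quotient of $\inn Q$, apply the induction hypothesis to get $\mlt F$ solvable, then invoke the assumed positive answer to Problem~\ref{Prob:inn_mlt_abelian}.

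The one substantive difference is how you obtain the surjection $\inn Q\twoheadrightarrow\inn F$. The paper simply quotes Albert's general result that for any loop $L$ and normal subloop $N$ one has $\inn{L/N}\cong\inn L/\{\varphi\in\inn L:\varphi(x)\in xN\text{ for all }x\}$, and applies it with $L=Q$, $N=A$. You instead reprove this fact in the special case of an abelian extension, by reading off the projection $\pi\colon\mlt Q\to\mlt F$ from the normal form of Lemma~\ref{Lm:form} and correcting an arbitrary lift of $C\in\inn F$ back to $\inn Q$ via the fiber translation $L_{(c_1,1)}^{-1}\in\ker\pi$. Your route is self-contained and stays entirely within the machinery already developed in the paper (Lemma~\ref{Lm:form} and the loop cocycle identities), at the cost of a small computation; the paper's route is shorter but imports an external reference. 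Either way the content is the same: $\inn F$ is a homomorphic image of $\inn Q$.
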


\begin{proof}
Suppose that $\inn{Q}$ is solvable. We proceed by induction on the length of a shortest iterated abelian extension for $Q$. If $Q$ is an abelian group, then $\mlt{Q}\cong Q$ is solvable. Let $Q=\abext{A}{F}{\Gamma}$ be an abelian extension such that $F$ is congruence solvable and with a shorter iterated abelian extension than $Q$. A result by Albert (see \cite[Lemma 8A]{Br46}, for instance) says that for every loop $L$ and its normal subloop $N$ the group $\inn{L/N}$ is isomorphic to a quotient of $\inn{L}$; in fact, $$\inn{L/N}\cong \inn L/\{\varphi\in\inn L:\varphi(x)\in xN\text{ for every }x\in L\}.$$
Since $\inn{Q}$ is solvable, we deduce that $\inn{F}\cong\inn{Q/A}$ is solvable. The induction assumption then applies to $F$, so $\mlt F$ is solvable. Finally, the positive answer to Problem \ref{Prob:inn_mlt_abelian} then ensures that $\mlt Q$ is solvable.
\end{proof}

\section*{Acknowledgement}

We thank Michael Kinyon for many useful discussions about the topic of this paper, particularly about Proposition \ref{Pr:central_mlt}.

\end{document}